%% file: SPS_general_uniqueness_v12.tex
\documentclass[12pt]{article}

\usepackage{amssymb}
\usepackage{amsthm}
\usepackage{amsmath}
\usepackage{graphicx}
\usepackage{fullpage}
\usepackage{subcaption}
\usepackage{color}
\usepackage{enumerate}
 \numberwithin{equation}{section}
\usepackage{booktabs}
\allowdisplaybreaks

\usepackage[pdftex]{hyperref}
 \usepackage{mathtools}
 \mathtoolsset{showonlyrefs}
 \usepackage[nocompress]{cite}
 \usepackage{tikz}
\usetikzlibrary{arrows.meta}

\theoremstyle{plain}
\newtheorem{thm}{Theorem}[section]
\newtheorem{cor}[thm]{Corollary}

\newtheorem{lem}[thm]{Lemma}
\newtheorem{prop}[thm]{Proposition}

\theoremstyle{definition}
\newtheorem{defn}[thm]{Definition}

\theoremstyle{remark}

\newcommand{\N}{\mathbb{N}}
\newcommand{\R}{\mathbb{R}}

\newcommand{\E}{\mathbb{E}}

\newcommand{\bp}{\begin{proof}[\ensuremath{\mathbf{Proof}}]}
\newcommand{\bs}{\begin{proof}[\ensuremath{\mathbf{Solution}}]}
\newcommand{\ep}{\end{proof}}
\newcommand{\be}{\begin{equation}}
\newcommand{\ee}{\end{equation}}

\newcommand{\id}{\text{id}_{\R}}

\DeclareMathOperator*{\esslim}{ess\,lim}

\begin{document}

\title{Uniqueness of pressureless flow}

\author{Ryan Hynd\footnote{Department of Mathematics, University of Pennsylvania.  Partially supported by NSF grant DMS-2350454.}\; and Hwa Kil Kim\footnote{Department of Mathematics Education, Hannam University. Partially supported by NRF grant  RS$\cdot$ 2026$\cdot $25598735.}}

\maketitle

\begin{abstract}
We consider the pressureless Euler equations in one spatial dimension. These equations model the dynamics of particles on a line that interact only through perfectly inelastic collisions. The flow was first shown to be unique for given initial conditions by Huang and Wang, assuming the entropy and the strong initial continuity of energy conditions hold.  In this paper, we prove an analogous uniqueness theorem using a Lagrangian interpretation.  The key ingredients are showing that each solution satisfying the entropy condition and an initial kinetic energy bound has this particular type of Lagrangian interpretation and a monotonicity inequality involving conditional expectation. 
\end{abstract}

\section{Introduction}
The pressureless Euler equations in one spatial dimension are
\be\label{SPS}
\begin{cases}
\hspace{.26in}\partial_t\rho +\partial_x(\rho v)=0\\
\partial_t(\rho v) +\partial_x(\rho v^2)=0.
\end{cases}
\ee
These equations hold in $\R\times (0,\infty)$ and the unknowns are the mass distribution $\rho$ and the velocity $v$.
This system governs the dynamics of collections of particles on the real line which do not interact pairwise. These particles also may collide, and they undergo perfectly inelastic collisions when they do.

\par An analogous system of equations was considered in three dimensions by Zel'dovich in his work on the expansion of matter without pressure \cite{Zeldovich}. This was important in his theory for  early galaxy formation. Zel'dovich was able to establish short time existence of solutions for given initial conditions essentially by the method of characteristics \cite{Gurbatov}; see \eqref{method-of-char} below for a solution obtained by this method. When characteristics cross this corresponds to particle collisions. Unfortunately,  not much more is known in the higher dimensional setting; specifically, it is not known how to continue solutions past collisions for general initial conditions. 

\par The pressureless Euler equations also arise in the study of optimal mass transportation.  See Chapter 5 of \cite{MR1964483} and Chapter 13 of \cite{MR2459454}. In particular, this system of equations is satisfied by geodesics in the space of Borel probability measures on $\R$ when endowed with the quadratic Wasserstein distance.  However, this solution has its initial and terminal mass distribution prescribed rather than having its initial mass distribution and initial velocity prescribed. It corresponds to the case of a solution on a given time horizon in which particles never collide. 
\\
 \par {\bf Initial value problem}.  A basic problem is to verify that there is a solution for given initial conditions 
\be\label{IC}
\rho|_{t=0}=\rho_0 \quad\text{and}\quad v|_{t=0}=v_0.
\ee
One typically assumes $\rho_0$ is a finite Borel measure on $\R$ so that the particles initially have finite total mass. Without loss of generality, we may assume that the total mass is equal to one and consider $\rho_0$ as a Borel probability measure on $\R$. It is also natural to suppose that the kinetic energy is initially finite. That is, $v_0: \R\to \R$ is $\rho_0$ measurable with 
\be\label{Initialkinetic}
\int_{\R}v_0^2d\rho_0<\infty.
\ee

\par Let us recall the definition of a weak solution pair $\rho$ and $v$ of the pressureless Euler equations \eqref{SPS} which satisfies the initial conditions \eqref{IC}.  We will write ${\cal P}(\R)$ for the space of Borel probability measures on $\R$ and say $(\mu_k)_{k\in \N}$ in ${\cal P}(\R)$ converges  {\it narrowly} to $\mu\in {\cal P}(\R)$ if 
$$
\lim_{k\to\infty}\int_{\R}fd\mu_k=\int_{\R}fd\mu
$$
for each $f\in C_b(\R)$. See section 5.1 of \cite{AmbrosioGigliSavare2008} for more on this notion of convergence. 
\begin{defn}
Suppose $\rho_0\in {\cal P}(\R)$ and $v_0\in L^2(\rho_0)$. A narrowly continuous $\rho: [0,\infty)\rightarrow {\cal P}(\R); t\mapsto \rho_t$ and Borel $v:\R\times(0,\infty)\rightarrow\R$ is a {\it weak solution pair} of the pressureless Euler equations \eqref{SPS}
which satisfies the initial conditions \eqref{IC} provided
\be\label{velocityIntegrability}
\int^T_0\int_{\R}v^2d\rho_tdt<\infty
\ee
for each $T>0$,
\be\label{EP1weak}
\int^\infty_0\int_{\R}(\partial_t\phi+v\partial_x\phi)d\rho_tdt+\int_{\R}\phi(\cdot,0)d\rho_0=0,
\ee
and 
\be\label{EP2weak}
\int^\infty_0\int_{\R}(v\partial_t\phi +v^2\partial_x\phi)d\rho_tdt+\int_{\R}\phi(\cdot,0)v_0d\rho_0=0
\ee
for each $\phi\in C^1_c(\R\times[0,\infty))$.
\end{defn}

\par There are two classes of initial conditions in which it is possible to easily design a weak solution pair. The first is for any $\rho_0\in {\cal P}(\R)$ and $v_0: \R\to \R$ continuous and nondecreasing with $v_0\in L^2(\rho_0)$. In this case, a particle starting out at a position $x$ in the support of $\rho_0$ is at position $x+tv_0(x)$ at time $t$. In particular, the distance between any two particles is nondecreasing and particles never collide.   Moreover, it is easily verified that 
\be\label{method-of-char}
\rho_t=(\id+tv_0)_{\#}\rho_0\quad\text{and}\quad v(\cdot,t)=v_0\circ (\id+tv_0)^{-1}
\ee
specifies a weak solution pair. Here $T_{\#}\mu(A)=\mu(T^{-1}(A))$ for Borel $T:\R\to\R$, Borel $A\subset \R$, and $\mu\in {\cal P}(\R)$. 
\input{noncolliding-figure.tex}

\par Another example is when the initial mass distribution is a convex combination of point masses $\rho_0=\sum^N_{i=1}m_i\delta_{x_i}.$   In this setting, particles move in straight lines unless they collide and undergo a perfectly inelastic collision. In such an event, the masses join and the instantaneous velocity of the larger mass is the mass average of the velocities of the individual particles immediately prior to the collision.  If $\gamma_i:[0,\infty)\to \R$ denotes the piecewise linear trajectory of the particle initially located at $x_i$, then it turns out that
\be\label{eq:sticky-soln-ex}
\rho_t=\sum^N_{i=1}m_i\delta_{\gamma_i(t)}
\quad \text{and}\quad
v(x,t)=
\begin{cases}
\dot\gamma_i(t+), \quad & x=\gamma_i(t)\\
0, \quad &\text{otherwise}
\end{cases}
\ee
is a weak solution pair \cite{MR4007615}.
\input{colliding-schematic.tex}

\par For general initial conditions $\rho_0$ and $v_0$, a weak solution pair was verified initially by E, Rykov, and Sinai \cite{ERykovSinai1996} and by Brenier and Grenier \cite{BrenierGrenier1998}. There have  also been many subsequent works on existence including \cite{NguyenTudorascu2008, MR4007615,NguyenTudorascu2015, Dermoune1999StickyProbab,MR2540269,Hynd2020AnnaliSticky, CavallettiSedjroWestdickenberg2015}. 
\\
\par {\bf Two conditions}.  Weak solution pairs of the initial value problem above can be constructed to have two additional properties.  The first is the {\it everywhere entropy condition}
\be\label{Eentropy}
(v(x,t)-v(y,t))(x-y)\le\frac{1}{t}(x-y)^2\; \text{for all $x,y\in \R$ and a.e. $t>0$}.
\ee
The second condition is {\it  strong initial continuity of the energy}. That is, 
\be\label{SIC}
\lim_{t\to 0^+}\int_\R g(x)v(x,t)^2d\rho_t(x)=\int_\R g(x)v_0(x)^2d\rho_0(x)
\ee
for $g\in C_c(\R)$. It turns out that a weak solution pair with these two properties is unique. This was first proved by Huang and Wang \cite{HuangWang2001}.  See also \cite{LiWarnecke2002, LiWarnecke2003, NguyenTudorascu2015,MR3039208, CarrilloGaltung2023, SuderTudorascu2023,Suder2021Thesis,HuangTang2023EulerPoisson} for subsequent discussions of uniqueness of weak solution pairs for \eqref{SPS} and related systems.    

\par In what follows, we will revisit this uniqueness question. In particular, we will consider this question for solutions which satisfy the  {\it  entropy condition}
\be\label{entropy}
(v(x,t)-v(y,t))(x-y)\le\frac{1}{t}(x-y)^2\; \text{for $\rho_t$ a.e. $x,y\in \R$ and a.e. $t>0$}
\ee
and the {\it initial kinetic energy bound}
\be\label{SICtwo}
\int_{\R}v(x,t)^2d\rho_t(x)\le \int_{\R}v_0(x)^2d\rho_0(x)
\text{ for almost every $t\ge 0$. }
\ee
Note that \eqref{Eentropy} is a stronger condition than \eqref{entropy}. It also turns out that \eqref{SICtwo} implies that \eqref{SIC} holds with the limit understood as an essential limit for $g\in C_b(\R)$; see Lemma \ref{StrongRightCont} below. Regardless, we will work directly with \eqref{entropy} and \eqref{SICtwo} since weak solution pairs which have these properties have been designed \cite{MR4007615,Hynd2020AnnaliSticky,MR2540269}.  
\\
\par {\bf Main result}.  In order to verify our main theorem below, we will make one additional simplifying assumption which is 
\be\label{SecondFiniteMoment}
\int_{\R}x^2d\rho_0(x)<\infty. 
\ee
By two weak solution pairs $\rho, v$ and  $\overline\rho, \overline v$ agreeing, we mean that 
$\rho_t=\overline{\rho}_t$ for all $t\ge 0$ and $v(x,t)=\overline v(x,t)$ for $\rho_t$ almost every $x\in \R$ and almost every $t\ge 0$.  
\\\\
\noindent{\bf Uniqueness Theorem}. {\it Suppose $\rho_0\in {\cal P}(\R)$ satisfies \eqref{SecondFiniteMoment} and $v_0\in L^2(\rho_0)$. There can be at most one weak solution pair $\rho$ and $v$ of the pressureless Euler equations \eqref{SPS} which has initial conditions $\rho_0$ and $v_0$, satisfies the entropy condition \eqref{entropy}, and fulfills the initial kinetic energy bound \eqref{SICtwo}.}
\\
\par Let us consider a Lipschitz continuous $X:[0,\infty)\to L^2(\rho_0)$ such that
\be\label{FlowMapIVP}
\begin{cases}
\displaystyle\dot X(t)=\E_{\rho_0}[v_0|X(t)]\text{ a.e. $t\ge 0$}\\
\displaystyle X(0)=\text{id}_{\R}.
\end{cases}
\ee
Here $\E_{\rho_0}[v_0|X(t)]$ is the conditional expectation of $v_0$ given $X(t)$ which is also the $L^2(\rho_0)$ projection of $v_0$ onto the subspace of $X(t)$ measurable functions on $\R$. The differential equation in \eqref{FlowMapIVP} is related to the pressureless Euler equations as it encodes the averaging which happens during perfectly inelastic collisions \cite{MR4007615}. Dermoune introduced this initial value problem in \cite{Dermoune1999StickyProbab} and explained that if we can solve it for $X$, we can choose a Borel $v: \R\times(0,\infty)\to \R$ such that 
\be\label{dermoune-choice}
\rho_t=X(t)_{\#}\rho_0\quad \text{and} \quad v(X(t),t)=\E_{\rho_0}[v_0|X(t)]
\ee
is a weak solution pair of \eqref{SPS} with the initial conditions $\rho_0$ and $v_0$.

\par Since 
$$
\dot X(t)=v(X(t),t)\text{ a.e. $t\ge 0$}
$$
with the $v$ chosen above, we say that $X$ is a {\it Lagrangian coordinate} corresponding to the pressureless Euler equations.  In what follows, 
we will show that {\it every} weak solution of the pressureless Euler equations which satisfies the entropy condition and the initial kinetic energy bound gives rise to such a Lagrangian coordinate. Then we will establish, in a certain sense, that the initial value problem \eqref{FlowMapIVP} has a unique solution.  The key device in that proof is the monotonicity inequality 
\be\label{KeyMonotonicityIneq}
\int_\R(\E_\rho[Z|X]-\E_\rho[Z|Y])(X-Y)d\rho \ge0,
\ee
which holds for $\rho\in {\cal P}(\R)$ and nondecreasing $X,Y,Z\in L^2(\rho)$.
\\
\par {\bf Our contribution.}  Our aim is to give a
direct approach to the uniqueness question
studied by Huang and Wang in 2001 \cite{HuangWang2001}.  Huang and Wang’s result allows more general initial mass measures $\rho_0$. However, their proof required bounded initial velocities $v_0$. Here we assume that $\rho_0$ has a finite second moment but only suppose $v_0\in L^2(\rho_0)$. 

\par Uniqueness of solutions to the pressureless Euler equations for given initial conditions remains of interest. Recently,  Suder and Tudorascu \cite{SuderTudorascu2023} use the Huang and Wang uniqueness criteria to identify the sticky particle flows considered there with scalar conservation law solutions for bounded, right-continuous initial velocities.  Likewise, Carrillo and Galtung \cite{CarrilloGaltung2023} established equivalence between gradient flow solutions and entropy solutions of associated scalar
conservation laws for pressureless Euler systems, including
systems with interaction forces.  

\par As mentioned above, we derive a Lagrangian coordinate from
every weak solution pair satisfying the entropy condition and initial kinetic energy bound and prove uniqueness using \eqref{KeyMonotonicityIneq}. Our proof is independent of earlier uniqueness results. We anticipate that this method may also
be useful for pressureless Euler systems with interaction potentials.
\\
\par This paper is organized as follows. In the next section, we recall a few key properties of weak solutions and prove the strong initial continuity of energy. We also recall some properties of conditional expectation and prove  inequality \eqref{KeyMonotonicityIneq}. In Section \ref{sect:prob-measure}, we show that corresponding to any weak solution pair of \eqref{SPS} is a Borel probability measure $\eta$ on the space of continuous functions from $[0,\infty)$ into $\R$ with the topology of local uniform convergence. Then we show in Section \ref{sect:lagrangian} that this $\eta$ is determined by an $X$ which solves \eqref{FlowMapIVP}. Finally, we will use this observation to prove the Uniqueness Theorem in Section \ref{sect:uniqueness}.

\section{Preliminaries}\label{sect:prelim}
In this section, we will discuss a few basic properties of weak solutions and conditional expectation that will be used throughout this note.

\subsection{Strong initial continuity of energy}
First, we recall some equivalent interpretations of the conservation of momentum \eqref{EP2weak}. As the following claim is a fairly routine exercise with test functions, we will omit the proof. 
\begin{prop}\label{prop:cons-mom-equiv} Suppose $\rho$ and $v$ is a weak solution pair of the pressureless Euler equations \eqref{SPS} which has initial conditions $\rho_0\in {\cal P}(\R)$ and $v_0\in L^2(\rho_0)$.  The following are equivalent. 
\begin{enumerate}[(i)]

\item For almost every $\tau\ge 0$,
 $$
\int_\R \phi(x,\tau)v(x,\tau)d\rho_\tau(x)=\int_\R \phi(x,0)v_0(x)d\rho_0(x)+\int^\tau_0\int_{\R}v(\partial_t\phi+v\partial_x\phi)d\rho_tdt
 $$
 for all $\phi\in C^1_b(\R\times[0,\infty))$.
 
 \item Equation \eqref{EP2weak} holds for all $\phi\in C^1_c(\R\times[0,\infty))$. 

 \item For almost every $\tau\ge 0$,
 \be
\int_\R g(x)v(x,\tau)d\rho_\tau(x)=\int_\R g(x)v_0(x)d\rho_0(x)+\int^\tau_0\int_{\R}v(x,t)^2g'(x)d\rho_t(x)dt
 \ee
 for each $g\in C^1_b(\R)$.
 \end{enumerate}
 \end{prop}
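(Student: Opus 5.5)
We prove the cycle of implications (i) $\Rightarrow$ (ii) $\Rightarrow$ (iii) $\Rightarrow$ (i). Throughout, the integrability \eqref{velocityIntegrability} is what makes every integral finite: for $\phi\in C^1_b$ the integrands $v\partial_t\phi$ and $v^2\partial_x\phi$ are bounded by $C(|v|+v^2)$, and $|v|\le 1+v^2$ is integrable against $d\rho_t\,dt$ on $[0,T]$. Write $m(\tau)(\phi):=\int_\R\phi(\cdot,\tau)v(\cdot,\tau)d\rho_\tau$.

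\textbf{(i) $\Rightarrow$ (ii).} Fix $\phi\in C^1_c(\R\times[0,\infty))$ with support in $\R\times[0,T)$. Apply (i) to $\phi$ at a.e. $\tau>T$. Then $\phi(\cdot,\tau)=0$ and the time integral over $[0,\tau]$ equals the integral over $[0,\infty)$, which is exactly \eqref{EP2weak}.

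\textbf{(ii) $\Rightarrow$ (iii).} Fix $g\in C^1_b(\R)$. First take $g\in C^1_c$ and test \eqref{EP2weak} with $\phi(x,t)=g(x)\theta_\epsilon(t)$, where $\theta_\epsilon$ is a $C^1$ approximation of $1_{[0,\tau]}$ decreasing from $1$ to $0$ on $[\tau,\tau+\epsilon]$. This gives
\[
\int_\R g v_0\,d\rho_0+\int_0^\infty\theta_\epsilon\int_\R v^2g'\,d\rho_t\,dt=\frac1\epsilon\int_\tau^{\tau+\epsilon}\int_\R g v\,d\rho_t\,dt .
\]
The function $t\mapsto\int g v\,d\rho_t$ is in $L^1_{loc}$. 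Letting $\epsilon\to0$ at its Lebesgue points yields (iii) for a.e. $\tau$.

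The exceptional null set depends on $g$. To remove this dependence, choose a countable family $\{g_k\}\subset C^1_c$ that is dense in $C^1_c$ with respect to $\|g\|_\infty+\|g'\|_\infty$, on each compact set. Let $N$ be the union of the exceptional sets of the $g_k$ together with the set of $\tau$ where $\int v(\cdot,\tau)^2d\rho_\tau=\infty$; $N$ is null. For $\tau\notin N$, the identity passes to every $g\in C^1_c$ by dominated convergence.

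Then pass to $g\in C^1_b$ by cutoff. Set $g_R=g\chi(\cdot/R)$, with $\chi\in C^1_c$, $\chi=1$ on $[-1,1]$ and $0\le\chi\le1$. The term $g\chi'(\cdot/R)/R$ in $g_R'$ is $O(1/R)$, and everything else converges by dominated convergence, using $v\in L^1(\rho_\tau)$, $v_0\in L^2(\rho_0)$ and $v^2\in L^1(d\rho_t dt)$.

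\textbf{(iii) $\Rightarrow$ (i).} This is the main obstacle: we must upgrade from time-independent $g$ to time-dependent $\phi$. Read (iii) as saying that, for each $g$ in the countable family and $\tau\notin N$,
\[
m(\tau)(g)=m(0)(g)+\int_0^\tau\Big(\int_\R v^2 g'\,d\rho_t\Big)\,dt .
\]
The right side is absolutely continuous in $\tau$. Hence, after discarding a null set, $\tau\mapsto m(\tau)(g)$ is a.e. equal to an absolutely continuous function $M_g$ with derivative $\int v^2g'\,d\rho_\tau$, uniformly over the countable family.

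For $\phi\in C^1_c(\R\times[0,\infty))$, I would approximate by sums $\sum_j a_j(t)g_j(x)$ with $a_j\in C^1$, or equivalently discretize time. Take a partition $0=t_0<\dots<t_n=\tau$ of points outside $N$ and telescope:
\[
m(\tau)(\phi(\cdot,\tau))-m(0)(\phi(\cdot,0))=\sum_i\Big[m(t_{i+1})\big(\phi(\cdot,t_{i+1})\big)-m(t_i)\big(\phi(\cdot,t_{i+1})\big)\Big]+\sum_i\Big[m(t_i)\big(\phi(\cdot,t_{i+1})-\phi(\cdot,t_i)\big)\Big].
\]
By (iii) with $g=\phi(\cdot,t_{i+1})$, the first sum equals $\int_0^\tau\!\int v^2\partial_x\phi(\cdot,t_{\lceil t\rceil})\,d\rho_t\,dt$. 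The second sum is a Riemann sum for $\int_0^\tau\!\int v\,\partial_t\phi\,d\rho_t\,dt$.

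Convergence of the second sum is the delicate point, because $t\mapsto m(t)$ is only defined a.e. It is handled by choosing partitions of points in $[0,\tau]\setminus N$ along which Riemann sums of the $L^1$ function $t\mapsto\int v\,\partial_t\phi\,d\rho_t$ converge. Such partitions exist for a.e. shift, by the standard fact that Riemann sums of an $L^1$ function converge for almost every translate of a mesh. Alternatively, one can use the uniform continuity of $\partial_t\phi$ together with the bound $\int|v|d\rho_t\in L^2_{loc}$.

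Finally, extend from $C^1_c$ to $C^1_b$ by the same spatial cutoff argument, applied to $\phi\chi(x/R)$.
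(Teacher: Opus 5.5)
The paper gives no proof of this proposition; it calls it a routine exercise with test functions. So there is no paper route to compare with, and your argument has to stand on its own.

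Your implications (i)$\Rightarrow$(ii) and (ii)$\Rightarrow$(iii) are fine. That includes removing the $g$-dependence of the null set via a countable $C^1$-dense family, and the spatial cutoff.

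The gap is in (iii)$\Rightarrow$(i), at exactly the step you call delicate. There you assert the convergence rather than prove it, and the justification has three problems.

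\textbf{The second sum is not a Riemann sum.} Your sum $\sum_i\int_\R\big(\phi(\cdot,t_{i+1})-\phi(\cdot,t_i)\big)v(\cdot,t_i)\,d\rho_{t_i}$ is not a Riemann sum of $H(t)=\int_\R v\,\partial_t\phi\,d\rho_t$. It differs from one by up to $\omega(\delta)\sum_i(t_{i+1}-t_i)F(t_i)$. Here $\delta$ is the mesh, $\omega$ is the modulus of continuity of $\partial_t\phi$, and $F(t)=\int_\R|v(\cdot,t)|\,d\rho_t$. So the same partitions must also keep the Riemann sums of $F$ bounded. Knowing $F\in L^2_{loc}$ says nothing about $F$ at individual nodes, so your ``alternatively'' does not remove the need to select nodes.

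\textbf{The ``standard fact'' is misquoted.} For a general $L^1$ function, Riemann sums over translates of a mesh need not converge almost everywhere as the mesh tends to zero. What holds is convergence in $L^1$ with respect to the shift. That gives a.e. convergence only along a subsequence of meshes.

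\textbf{Shifted meshes miss the endpoints.} They do not have $0$ and $\tau$ as nodes. The last interval contributes up to $\delta\|\partial_t\phi\|_\infty F(t_{n-1})$, and you must also show this vanishes.

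All of this can be arranged. Average over the shift, extract one subsequence of meshes that works simultaneously for $H$, $F$ and the end terms, and use that almost every shift puts all nodes in $N^c$. But that is the real content of the step, and it is missing.

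\textbf{A cleaner repair.} You can avoid point evaluations off $N$ entirely. For $t,s\notin N$ (or $s=0$), (iii) gives $|m(t)(g)-m(s)(g)|\le\|g'\|_\infty\int_s^t\int_\R v^2\,d\rho_r\,dr$ for every $g\in C^1_b(\R)$.

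Take $\phi\in C^2_c(\R\times[0,\infty))$. Write $m(t_i)\big(\phi(\cdot,t_{i+1})-\phi(\cdot,t_i)\big)=\int_{t_i}^{t_{i+1}}m(t_i)\big(\partial_t\phi(\cdot,s)\big)\,ds$. Compare this with $m(s)\big(\partial_t\phi(\cdot,s)\big)$ for $s\notin N$. The total error is at most $\delta\,\|\partial_x\partial_t\phi\|_\infty\int_0^\tau\int_\R v^2\,d\rho_r\,dr$.

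So your telescoping identity converges for \emph{every} partition with nodes in $N^c$. This gives (i) at every $\tau\notin N$ for such $\phi$. To reach $C^1_c$, approximate in $C^1$ by $C^2_c$ functions (extend to $t<0$ and mollify) at fixed $\tau\notin N$. The limit passage uses $v(\cdot,\tau)\in L^1(\rho_\tau)$ and \eqref{velocityIntegrability}. Your spatial cutoff, together with a time cutoff equal to $1$ on $[0,\tau]$, then gives $C^1_b$.

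The tensor-product route you mention first also works directly. For $\phi=a(t)g(x)$, the function $aM_g$ is absolutely continuous with derivative $a'M_g+a\int_\R v^2g'\,d\rho_t$. Since $M_g=m(\cdot)(g)$ off $N$, integrating gives (i) at every $\tau\notin N$. Then approximate $\phi\in C^1_c$ in $C^1$ on $\R\times[0,\tau]$ by $\chi(x)P_n(x,t)$ with polynomials $P_n$.
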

 
\par Note that for a Lebesgue measurable function $f: (0,\infty)\to \R$, we'll write
$$
\esslim_{t\to 0^+} f(t)=L
$$
provided that for every \(\varepsilon>0\), there exists \(\delta>0\) such that
$|f(t)-L|<\varepsilon$ for almost every $t$ with $0<t<\delta.$ It is equivalent to 
require that there is a null set \(N\subset [0,\infty)\) such that 
$$
\lim_{\substack{t\to 0^+\\ t\not\in N}}f(t)=L. 
$$
\begin{lem}\label{StrongRightCont} 
Suppose $\rho$ and $v$ is a weak solution pair of the pressureless Euler equations \eqref{SPS} which has initial conditions $\rho_0\in {\cal P}(\R)$ and $v_0\in L^2(\rho_0)$ and additionally satisfies the initial kinetic energy bound \eqref{SICtwo}.  
The limit  
\be\label{eq:limit-sic-2}
\esslim_{t \to 0^+}\int_{\R}g(x)v(x,t)^2d\rho_t(x)=\int_{\R}g(x)v_0(x)^2d\rho_0(x)
\ee
holds for each $g\in C_b(\R)$.
\end{lem}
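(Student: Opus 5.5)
We follow the standard route: weak convergence plus convergence of norms gives strong convergence. Here this has to be phrased with measures that vary in time.

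\textbf{Step 1: weak convergence of the momentum.} Consider the momentum measures $\mu_t:=v(\cdot,t)\rho_t$. By Proposition \ref{prop:cons-mom-equiv}(iii), for a.e.\ $\tau$ and all $g\in C^1_b(\R)$,
$$
\Big|\int_\R g\,v(\cdot,\tau)\,d\rho_\tau-\int_\R g\,v_0\,d\rho_0\Big|\le \|g'\|_\infty\int_0^\tau\!\!\int_\R v^2\,d\rho_t\,dt\le \tau\,\|g'\|_\infty\int_\R v_0^2\,d\rho_0,
$$
where the last inequality uses \eqref{SICtwo}. Hence
$$
\esslim_{t\to0^+}\int_\R g\,v(\cdot,t)\,d\rho_t=\int_\R g\,v_0\,d\rho_0 \qquad\text{for } g\in C^1_b(\R).
$$
We extend this to $g\in C_b(\R)$ by density. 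The total variation of $\mu_t$ is bounded uniformly: $\int|v|\,d\rho_t\le (\int v_0^2d\rho_0)^{1/2}$. Because $\rho_t\to\rho_0$ narrowly as $t\to0^+$, the family $\{\rho_t\}_{t\in[0,1]}$ is tight. Tightness together with the $L^2$ bound (Cauchy--Schwarz: $\int_{|x|>R}|v|\,d\rho_t\le \|v\|_{L^2(\rho_t)}\,\rho_t(|x|>R)^{1/2}$) lets us approximate $g\in C_b$ by smooth $g_\epsilon$ uniformly on a large compact set, with uniformly small error off it. A single exceptional null set suffices, since a countable dense family of test functions is enough.

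\textbf{Step 2: lower semicontinuity of the energy.} We show that for nonnegative $g\in C_b(\R)$,
$$
\int_\R g\,v_0^2\,d\rho_0\le \operatorname*{ess\,liminf}_{t\to0^+}\int_\R g\,v(\cdot,t)^2\,d\rho_t.
$$
The tool is duality: for any $h\in C_b(\R)$, $2hv-h^2\le v^2$, so
$$
\int_\R g\,v(\cdot,t)^2\,d\rho_t\ge \int_\R g\,\big(2h\,v(\cdot,t)-h^2\big)\,d\rho_t .
$$
By Step 1 (applied with test function $gh$) and narrow convergence of $\rho_t$ (applied to $gh^2$), the right-hand side converges to $\int_\R g(2hv_0-h^2)\,d\rho_0$. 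Taking the supremum over $h\in C_b$, which is dense in $L^2(g\rho_0)$, recovers $\int_\R g\,v_0^2\,d\rho_0$. The supremum can be taken over a countable family, so again only one null set is excluded.

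\textbf{Step 3: conclusion via the upper bound.} Fix $g\in C_b$ and write $g=c-(c-g)$ with $c=\|g\|_\infty$, so that $c-g\ge0$ and $c+g\ge0$. Combining \eqref{SICtwo} (an upper bound for the constant $c$) with Step 2 applied to $c-g\ge0$ gives
$$
\operatorname*{ess\,limsup}_{t\to0^+}\int_\R g\,v^2\,d\rho_t\le c\int_\R v_0^2\,d\rho_0-\int_\R(c-g)\,v_0^2\,d\rho_0=\int_\R g\,v_0^2\,d\rho_0.
$$
Step 2 applied to $c+g\ge0$, together with the same upper bound for the constant $c$, gives the matching lower bound on the ess liminf. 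This proves \eqref{eq:limit-sic-2}.

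The main obstacle is Step 1's passage from $C^1_b$ to $C_b$ test functions and the bookkeeping of null sets. This requires uniform integrability of $v(\cdot,t)$ against $\rho_t$ at infinity, which we derive from tightness of $\{\rho_t\}$ and the uniform $L^2$ bound from \eqref{SICtwo}.
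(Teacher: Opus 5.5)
Your argument is correct, and it reaches the conclusion by a different route from the paper's. The paper first obtains convergence of $\int g\,v(\cdot,t)\,d\rho_t$ for $g\in C^1_b(\R)$. It then takes two things from Theorem 5.4.4 of Ambrosio--Gigli--Savar\'e: the lower semicontinuity of the energy, and the narrow convergence $(\id\times v(\cdot,t_k))_{\#}\rho_{t_k}\to(\id\times v_0)_{\#}\rho_0$ of the graph measures. It finishes with a uniform integrability argument for $(x,y)\mapsto g(x)y^2$.

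You instead prove the needed lower semicontinuity directly, from the pointwise inequality $v^2\ge 2hv-h^2$ localized by a nonnegative weight $g$. You then replace the graph convergence and uniform integrability with the splitting $g=(c+g)-c=c-(c-g)$. Lower semicontinuity against nonnegative weights, together with the global upper bound \eqref{SICtwo}, forces convergence for every $g\in C_b(\R)$. This is the familiar fact that lower semicontinuity against nonnegative test functions plus convergence of total mass gives narrow convergence of $v(\cdot,t)^2\rho_t$ to $v_0^2\rho_0$.

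Your route is self-contained and proves exactly what the lemma asks. The paper's route yields the stronger statement that the joint laws of position and velocity converge, but that extra information is not used later.

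One simplification: the step you flag as the main obstacle can be avoided entirely. Take $h\in C_c(\R)$ in Step 2; this class is still dense in $L^2(g\rho_0)$. Then $gh\in C_c(\R)$ can be uniformly approximated by $C^1_c$ functions, and the uniform bound $\int|v(\cdot,t)|\,d\rho_t\le\|v_0\|_{L^2(\rho_0)}$ controls the error. No tightness of $\{\rho_t\}$ is needed. Moreover, the single null set coming from Proposition \ref{prop:cons-mom-equiv}(iii) and \eqref{SICtwo} serves all test functions at once, so no countability argument is required either.
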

\begin{proof}
Choose $N^c\subset [0,\infty)$ as a subset of full measure such that for all $t\not\in N$, 
$$
\int_{\R}g(x)v(x,t)d\rho_t(x)=\int_{\R}g(x)v_0(x)d\rho_0(x)+\int^t_0\int_{\R}g'(x)v(x,s)^2d\rho_s(x)ds
$$
for each $g\in C^1_b(\R)$. We may also assume 
\be\label{SICthree}
\int_{\R}v(x,t)^2d\rho_t(x)\le \int_{\R}v_0(x)^2d\rho_0(x)
\ee
for all $t\not\in N$ since this holds on a set of full Lebesgue measure.

\par It follows that 
$$
\lim_{\substack{t\to0^+\\t\not\in N}}\int_{\R}g(x)v(x,t)d\rho_t(x)=\int_{\R}g(x)v_0(x)d\rho_0(x).
$$
for each $g\in C^1_b(\R)$.  According to Theorem 5.4.4 of \cite{AmbrosioGigliSavare2008}, 
$$
\liminf_{\substack{t\to0^+\\t\not\in N}}\int_{\R}v(x,t)^2d\rho_t(x)\ge \int_{\R}v_0(x)^2d\rho_0(x).
$$
And by \eqref{SICthree}, 
\be
\limsup_{\substack{t\to0^+\\t\not\in N}}\int_{\R}v(x,t)^2d\rho_t(x)\le \int_{\R}v_0(x)^2d\rho_0(x).
\ee
Therefore, 
\be\label{RealSIC0}
\lim_{\substack{t\to0^+\\t\not\in N}}\int_{\R}v(x,t)^2d\rho_t(x)= \int_{\R}v_0(x)^2d\rho_0(x).
\ee

Now let $t_k\in N^c$ with $t_k \to 0$. Theorem 5.4.4 of \cite{AmbrosioGigliSavare2008} also gives that 
 $$
 (\id \times v(\cdot, t_k))_{\#}\rho_{t_k}\to  (\id \times v_0)_{\#}\rho_0
 $$
 as $k\to \infty$  narrowly in ${\cal P}(\R\times \R)$.  It is now standard to employ \eqref{RealSIC0} to show that 
 $$
 (x,y)\mapsto g(x)y^2
 $$  
 is uniformly integrable with respect to the family $\{ (\id \times v(\cdot, t_k))_{\#}\rho_{t_k} \}_{k\in\N}\subset {\cal P}(\R\times \R)$ provided that $g\in C_b(\R)$. 
 
 \par Consequently, 
 \begin{align}
 \lim_{k\to\infty}\int_{\R}g(x)v(x,t_k)^2d\rho_{t_k}(x)&=
 \lim_{k\to\infty}\int_{\R^2}g(x)y^2d\left[(\id \times v(\cdot, t_k))_{\#}\rho_{t_k}\right](x,y)\\
  &= \int_{\R^2}g(x)y^2d\left[(\id \times v_0)_{\#}\rho_0\right](x,y)\\
  &=\int_{\R}g(x)v_0(x)^2d\rho_0(x).
 \end{align}
As the sequence $t_k$ was arbitrary, the limit \eqref{eq:limit-sic-2} holds for each $g\in C_b(\R)$.
\end{proof}

\subsection{Conditional expectation}\label{sect:conditional}
Suppose $\rho\in {\cal P}(\R)$, $Y\in L^2(\rho)$, and $X:\R\to [-\infty,\infty]$ is a nondecreasing function. Recall that the conditional expectation $\E_\rho[Y|X]$ is any function
$f(X)\in L^2(\rho)$, where $f: [-\infty,\infty]\to\R$ is Borel and
$$
\int_{\R}f(X) h(X)d\rho=\int_{\R}Y h(X)d\rho
$$
for each Borel $h: [-\infty,\infty]\to\R$ with $h(X)\in L^2(\rho)$.

\par It is a standard fact that $\E_\rho[Y|X]$ exists and is uniquely specified $\rho$ almost everywhere. Moreover,
$$
\int_{\R}(\E_\rho[Y|X]-Y)^2d\rho\le \int_{\R}(h(X)-Y)^2d\rho
$$
for each Borel $h: [-\infty,\infty]\to\R$ with $h(X)\in L^2(\rho)$. Therefore, $\E_\rho[Y|X]$ is the projection of $Y$ onto the subspace of $L^2(\rho)$ of $X$ measurable functions. We also note that the notion of conditional expectation applies to general probability spaces as discussed in any standard reference on probability theory.

\par It is routine to check that if $X$ is increasing, then
$$
\E_\rho[Y|X]=Y.
$$
If $X$ is not increasing, there are at most countably many $c\in [-\infty,\infty]$ where
$
X^{-1}(\{c\})
$
has more than one point. The key thing to note here is that these fibers of $X$ are disjoint intervals; since these intervals are nontrivial, they each contain a rational number. As a result, the collection of such intervals must be countable.

\par We will index these intervals with $i\in I\subset \N$ and
write
$$
C_i=X^{-1}(\{c_i\})
$$
for a finite or infinite sequence of distinct values $(c_i)_{i\in I}$.
Note in particular, $C_i$ is a nontrivial interval, possibly open, closed, half-open, or unbounded.
We will also write
$$
O=\R\setminus \bigcup_{i\in I}C_i
$$
and note that $X$ is increasing on $O$.

\par The following assertion gives a characterization of $\E_\rho[Y|X]$. In the proof, we will also consider the generalized inverse of $X$ which is defined by
$$
X^{-1}(r)=\inf\bigg\{y\in\R:X(y)\ge r\bigg\},
$$
for $r\in [-\infty,\infty]$. Note that $X^{-1}$ is nondecreasing and $X^{-1}(X(y))\le y$ for any $y\in \R$. Moreover, if $y\in O$, then $X^{-1}(X(y))=y$.

\begin{lem}\label{LemmaCondExpFormula}
For each $i\in I$,
$$
\E_\rho[Y|X]=\frac{1}{\rho(C_i)}\int_{C_i}Yd\rho
$$
$\rho$ almost everywhere on $C_i$ with $\rho(C_i)>0$ and
$$
\E_\rho[Y|X]=Y
$$
$\rho$ almost everywhere on $O$.
\end{lem}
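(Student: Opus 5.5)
The plan is to construct an explicit candidate $f(X)$ and check it against the defining identity of $\E_\rho[Y|X]$; uniqueness $\rho$ almost everywhere then yields the lemma. Define $f:[-\infty,\infty]\to\R$ by
$$
f(c)=\begin{cases}\displaystyle\frac{1}{\rho(C_i)}\int_{C_i}Yd\rho, & c=c_i\text{ with }\rho(C_i)>0,\\[2mm] 0,& c=c_i \text{ with }\rho(C_i)=0,\\[1mm] Y(X^{-1}(c)), & \text{otherwise}.\end{cases}
$$
For $y\in O$ we have $X^{-1}(X(y))=y$, so $f(X(y))=Y(y)$. On $C_i$ with $\rho(C_i)>0$, $f(X)$ equals the average of $Y$ over $C_i$. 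Thus $f(X)$ agrees $\rho$ almost everywhere with the function described in the statement.

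\textbf{Measurability and integrability.} Since $Y$ is only defined $\rho$ almost everywhere, I first fix a Borel representative of $Y$. The function $X^{-1}$ is nondecreasing, hence Borel. The set $\{c_i\}$ is countable, hence Borel, so $f$ is Borel. For square integrability, Jensen's inequality on each $C_i$ gives
$$
\int_{C_i}f(X)^2d\rho\le\int_{C_i}Y^2d\rho,
$$
and on $O$ we have $f(X)=Y$. Summing over the countably many pieces gives $f(X)\in L^2(\rho)$.

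\textbf{Defining identity.} Let $h$ be Borel with $h(X)\in L^2(\rho)$. Split $\R=O\cup\bigcup_i C_i$; this is a disjoint countable union, since $O$ is the complement of the countable union of the $C_i$.
\begin{itemize}
\item On $O$ the integrands $f(X)h(X)$ and $Yh(X)$ coincide.
\item On each $C_i$ the function $h(X)=h(c_i)$ is constant. If $\rho(C_i)>0$, the definition of $f(c_i)$ gives $\int_{C_i}f(X)h(X)d\rho=h(c_i)\int_{C_i}Yd\rho=\int_{C_i}Yh(X)d\rho$.
\item Null pieces contribute nothing to either side.
\end{itemize}
Summing over the pieces is justified by dominated convergence, since $f(X)h(X)$ and $Yh(X)$ are in $L^1(\rho)$ by Cauchy--Schwarz. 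This gives $\int f(X)h(X)d\rho=\int Yh(X)d\rho$. By the uniqueness of conditional expectation recalled above, $\E_\rho[Y|X]=f(X)$ $\rho$ almost everywhere, which is the claim.

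\textbf{Main obstacle.} The only delicate point is the measurability bookkeeping. One must ensure $f$ is Borel on $[-\infty,\infty]$ and that $O$ is a Borel set, so that "$\rho$ almost everywhere on $O$" makes sense. Both follow because $X^{-1}$ is monotone, the $C_i$ are intervals, and there are only countably many of them.
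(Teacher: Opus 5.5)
Your proof is correct and matches the paper's argument: the paper defines the same piecewise function $Z$ (average on each $C_i$ of positive mass, $0$ on null $C_i$, $Y$ on $O$), gets $X$-measurability from $Z=Y\circ X^{-1}\circ X$ on $O$, uses Jensen for square integrability, and checks the defining identity piece by piece over the $C_i$ and $O$. Your explicit Borel $f$ on $[-\infty,\infty]$ just makes the paper's measurability remark concrete. One small fix: set $f=0$ wherever $X^{-1}(c)=\pm\infty$, since $Y(X^{-1}(c))$ is undefined there.
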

\begin{proof}
Define $Z: \R\rightarrow \R$ via
$$
Z=
\begin{cases}
\displaystyle\frac{1}{\rho(C_i)}\int_{C_i}Yd\rho& \text{on $C_i$ with $\rho(C_i)>0$}
\\
0& \text{on $C_i$ with $\rho(C_i)=0$}
\\
\displaystyle Y & \text{on $O$}.
\end{cases}
$$
Since
$$
Z=Y\circ X^{-1}\circ X
$$
on $O$ and both $X$ and $Z$ are constant on all $C_i$, it follows that $Z$ is $X$ measurable. That is, $Z=h(X)$ for some Borel $h:[-\infty,\infty]\to\R$.

\par It is straightforward to invoke Jensen's inequality and show
$$
\int_\R Z^2d\rho\le \int_\R Y^2d\rho.
$$
Furthermore,
if $g:[-\infty,\infty]\to\R$ is Borel with $g(X)\in L^2(\rho)$, then
\begin{align}
\int_{\R}g(X)Zd\rho&=\sum_{i\in I^+}\int_{C_i}g(X)Zd\rho+\int_{O}g(X)Zd\rho\\
&=\sum_{i\in I^+}g(c_i)\int_{C_i}Zd\rho+\int_{O}g(X)Yd\rho\\
&=\sum_{i\in I^+}g(c_i)\int_{C_i}Yd\rho+\int_{O}g(X)Yd\rho\\
&=\sum_{i\in I^+}\int_{C_i}g(X)Yd\rho+\int_{O}g(X)Yd\rho\\
&=\int_{\R}g(X)Yd\rho.
\end{align}
Here $I^+$ are the indices $i\in I$ with $\rho(C_i)>0$. The assertion follows as $\E_\rho[Y|X]$ is defined up to a set of $\rho$ measure $0$.
\end{proof}

\subsection{Monotonicity inequality}
Next we will verify inequality \eqref{KeyMonotonicityIneq}.   When we say that $Y\in L^2(\rho)$ is nondecreasing for a given $\rho\in {\cal P}(\R)$, we mean that there is a nondecreasing $\tilde Y: \R\to[-\infty,\infty]$ 
with $Y=\tilde Y$ $\rho$ almost everywhere. We will also say that $Y$ is increasing if $\tilde Y$ is increasing. 

\begin{lem}
Suppose $\rho\in {\cal P}(\R)$ and $X,Y, Z\in L^2(\rho)$ are nondecreasing. Then 
 \be
 \int_\R(\E_\rho[Z|X]-\E_\rho[Z|Y])(X-Y)d\rho \ge0.
 \ee
\end{lem}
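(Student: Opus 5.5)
The plan is to reduce the inequality to a covariance (Chebyshev-type) inequality on each fiber $C_i$ of a conditioning variable, using the explicit formula of Lemma \ref{LemmaCondExpFormula}.

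\textbf{Step 1 (algebraic reduction).} Replace $X,Y,Z$ by nondecreasing representatives $\tilde X,\tilde Y,\tilde Z:\R\to[-\infty,\infty]$. Since $X$ is trivially $X$ measurable and lies in $L^2(\rho)$, the defining property of conditional expectation with $h(X)=X$ gives $\int_\R\E_\rho[Z|X]\,X\,d\rho=\int_\R ZX\,d\rho$. Likewise $\int_\R\E_\rho[Z|Y]\,Y\,d\rho=\int_\R ZY\,d\rho$. Expanding the integrand and regrouping therefore gives
\begin{equation}
\int_\R(\E_\rho[Z|X]-\E_\rho[Z|Y])(X-Y)\,d\rho=\int_\R (Z-\E_\rho[Z|X])\,Y\,d\rho+\int_\R (Z-\E_\rho[Z|Y])\,X\,d\rho .
\end{equation}
By symmetry it suffices to prove the following claim: for nondecreasing $X,Y,Z\in L^2(\rho)$,
\begin{equation}
\int_\R (Z-\E_\rho[Z|X])\,Y\,d\rho\ge 0 .
\end{equation}

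\textbf{Step 2 (localizing to fibers).} Let $(C_i)_{i\in I}$ and $O$ be the decomposition associated with $\tilde X$ in Section \ref{sect:conditional}. By Lemma \ref{LemmaCondExpFormula}:
\begin{enumerate}[(i)]
\item $Z-\E_\rho[Z|X]=0$ $\rho$-a.e. on $O$;
\item on each $C_i$ with $\rho(C_i)>0$, we have $\E_\rho[Z|X]=\bar Z_i:=\rho(C_i)^{-1}\int_{C_i}Z\,d\rho$;
\item the sets $C_i$ with $\rho(C_i)=0$ contribute nothing.
\end{enumerate}
Since $(Z-\E_\rho[Z|X])Y\in L^1(\rho)$ by Cauchy--Schwarz, dominated convergence lets us write the integral as the countable sum
\begin{equation}
\sum_{i\in I^+}\int_{C_i}(Z-\bar Z_i)\,Y\,d\rho .
\end{equation}
It then suffices to show that each term is nonnegative.

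\textbf{Step 3 (Chebyshev on an interval).} Fix $i\in I^+$ and let $\mu_i=\rho|_{C_i}/\rho(C_i)$, a probability measure concentrated on the interval $C_i$. Because $\int_{C_i}(Z-\bar Z_i)\,d\rho=0$, we have
\begin{equation}
\frac{1}{\rho(C_i)}\int_{C_i}(Z-\bar Z_i)Y\,d\rho=\int Z Y\,d\mu_i-\int Z\,d\mu_i\int Y\,d\mu_i=\frac12\iint_{C_i\times C_i}(Z(x)-Z(y))(Y(x)-Y(y))\,d\mu_i(x)\,d\mu_i(y).
\end{equation}
The right-hand side is nonnegative because $Z$ and $Y$ are both nondecreasing, so $(Z(x)-Z(y))(Y(x)-Y(y))\ge0$ for all $x,y$. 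This uses only that $C_i\subset\R$ is totally ordered; the fact that $C_i$ is an interval is not even needed.

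\textbf{Expected obstacle.} The main thing to handle carefully is bookkeeping rather than any deep step. One must make sure the representatives may take the values $\pm\infty$ only on $\rho$-null sets, which holds since the functions lie in $L^2(\rho)$. The double-integral identity in Step 3 needs $Y,Z\in L^2(\mu_i)$, which follows from $Y,Z\in L^2(\rho)$. Finally, the interchange of the sum over $i$ with the integral is justified by the $L^1(\rho)$ bound noted in Step 2.
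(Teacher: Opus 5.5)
Your proposal is correct and follows the paper's proof essentially step for step. Both arguments use the same regrouping, via the defining property of conditional expectation, into $\int_\R (Z-\E_\rho[Z|X])Y\,d\rho+\int_\R (Z-\E_\rho[Z|Y])X\,d\rho$, then the same fiberwise localization through Lemma \ref{LemmaCondExpFormula}, and then the same covariance identity $\frac{1}{2}\iint (Z(a)-Z(w))(Y(a)-Y(w))\,d\mu_i(a)\,d\mu_i(w)\ge 0$ on each fiber of positive mass, concluding by interchanging $X$ and $Y$.
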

\begin{proof}
By the defining property of conditional expectation,
\begin{align*}
&\int_\R
\big(\E_\rho[Z|X]-\E_\rho[Z|Y]\big)(X-Y)\,d\rho\\
&\qquad=
\int_\R \big(Z-\E_\rho[Z|X]\big)Y\,d\rho
+\int_\R \big(Z-\E_\rho[Z|Y]\big)X\,d\rho.
\end{align*}
We will show that both integrals on the right are nonnegative. 

\par Without any loss of generality, we may assume that $X: \R\to [-\infty,\infty]$ is nondecreasing. Consequently, there is a countable family of 
disjoint intervals $(C^x_{i})_{i\in I^x}$ for which $X|_{C^x_i}$ is equal to a constant $c_i^x$ for all $i\in I^x$.  And $X|_{O^x}$ is increasing, where
$$
O^x=\R\setminus \bigcup_{i\in I^x}  C^x_i.
$$
Moreover, 
\be\label{ERhoIdXformula}
\E_\rho[Z|X]=
\begin{cases}
\displaystyle\frac{1}{\rho(C^x_i)}\int_{C^x_i}Z d\rho& \text{on $C^x_i$ with $\rho(C^x_i)>0$}
\\
\displaystyle Z & \text{in $O^x$}
\end{cases}
\ee
$\rho$ almost everywhere by Lemma \ref{LemmaCondExpFormula}.

\par Observe
$$
\int_\R \big(Z-\E_\rho[Z|X]\big)Y\,d\rho
=
\sum_{\substack{i\in I^x\\ \rho(C_i^x)>0}}
\int_{C_i^x}
\left(Z-\frac{1}{\rho(C_i^x)}
\int_{C_i^x}Z\,d\rho\right)Y\,d\rho.
$$
For each $i\in I^x$ with $\rho(C_i^x)>0$, we have
\begin{align*}
&\int_{C_i^x}
\left(Z-\frac{1}{\rho(C_i^x)}
\int_{C_i^x}Z\,d\rho\right)Y\,d\rho\\
&\qquad=
\int_{C_i^x}ZY\,d\rho
-\frac{1}{\rho(C_i^x)}
\left(\int_{C_i^x}Z\,d\rho\right)
\left(\int_{C_i^x}Y\,d\rho\right)\\
&\qquad=
\frac{1}{2\rho(C_i^x)}
\int_{C_i^x}\int_{C_i^x}
\big(Z(a)-Z(w)\big)\big(Y(a)-Y(w)\big)
\,d\rho(a)\,d\rho(w)\\
&\qquad\ge 0,
\end{align*}
since $Y$ and $Z$ are nondecreasing. It follows that
$$
\int_\R \big(Z-\E_\rho[Z|X]\big)Y\,d\rho\ge 0.
$$
Interchanging $X$ and $Y$ gives
$$
\int_\R \big(Z-\E_\rho[Z|Y]\big)X\,d\rho\ge 0.
$$
Combining these inequalities with the first identity proves
the assertion.
\end{proof}

\section{Path space}\label{sect:prob-measure}
We will denote $\Gamma$ as the space of continuous paths $\gamma: [0,\infty)\to \R$ endowed with the topology of local uniform convergence. It is well known that $\Gamma$ is metrizable. Moreover, it is also a standard fact that $\Gamma$ is complete and separable. We will adapt Theorem 8.2.1 of \cite{AmbrosioGigliSavare2008} and show that solutions of \eqref{SPS} can be characterized by a Borel probability measure on $\Gamma$. 

\par To this end, we will make use of the evaluation maps 
$$
e_t:\Gamma\to \R; \gamma\to \gamma(t)
$$
for $t\ge 0$.  We recall each $e_t$ is continuous and therefore measurable.  We will also write 
$$
AC^2_{\text{loc}}([0,\infty))
$$
for the paths $\gamma:[0,\infty)\to \R$ for which $\gamma:[0,T]\to \R$ is absolutely continuous with $\dot\gamma\in L^2([0,T])$ for each $T>0$. 

\begin{lem}\label{EtaLemma}
Suppose $\rho$ and $v$ is a weak solution pair of the pressureless Euler equations \eqref{SPS} which has initial conditions $\rho_0\in {\cal P}(\R)$ and $v_0\in L^2(\rho_0)$ and additionally satisfies \eqref{entropy} and \eqref{SICtwo}.  There is $\eta\in {\cal P}(\Gamma)$ with the following properties. 

\begin{enumerate}[(i)]

\item  $\eta$ is concentrated on $AC^2_{\text{loc}}([0,\infty))$.

\item For $\eta $ almost every $\gamma\in\Gamma$,
$\dot\gamma(t)=v(\gamma(t),t)$ a.e. $t\ge 0$.

\item $\rho_t=e_t{_\#}\eta$ for each $t\ge 0$.

\item For every $\gamma, \zeta\in \textup{supp}(\eta)$
$$
\frac{|\gamma(t)-\zeta(t)|}{t}\le \frac{|\gamma(s)-\zeta(s)|}{s}
$$
for $0<s\le t$. 

\item For each $\gamma, \zeta\in \textup{supp}(\eta)$ with $\gamma(0)<\zeta(0)$, 
$\gamma(t)\le \zeta(t)$ for $t\ge 0$.

\item 
$$
\lim_{t\to0^+}\int_\Gamma\left(\frac{\gamma(t)-\gamma(0)}{t}-v_0(\gamma(0))\right)^2d\eta(\gamma)=0.
$$
\end{enumerate}

\end{lem}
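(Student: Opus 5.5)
The plan is to build $\eta$ with the superposition principle, Theorem 8.2.1 of \cite{AmbrosioGigliSavare2008}, and then extract (iv)--(vi) from the entropy condition and the kinetic energy bound. The continuity equation \eqref{EP1weak} holds with velocity $v$. The integrability \eqref{velocityIntegrability} on each $[0,T]$ gives $\int_0^T\int_\R |v|^2 d\rho_t dt<\infty$. Theorem 8.2.1 is stated on a finite interval, so we apply it on $[0,T]$ for each $T\in\N$. This gives measures $\eta^T$ on $C([0,T])$ with three properties: they are concentrated on $AC^2$ curves solving $\dot\gamma=v(\gamma,t)$ a.e., their marginals are $\rho_t$, and
\[
\int\int_0^T|\dot\gamma|^2dt\,d\eta^T=\int_0^T\int_\R v^2d\rho_tdt .
\]
These measures must be glued into one $\eta$ on $\Gamma$. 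The cleanest route is to note that once (v) is known, each $\eta^T$ is determined by $\rho_0$: curves are ordered by their starting point, and a mass-splitting argument shows no branching. This gives consistency in $T$, and Kolmogorov/projective-limit on $\Gamma$ then yields $\eta$. Alternatively, tightness plus a diagonal argument produces a limit, with (ii) preserved because the characterization of Theorem 8.2.1 is closed. Either way, (i)--(iii) follow.

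For (iv), use the entropy condition \eqref{entropy}. For $\eta\otimes\eta$ a.e.\ pair $(\gamma,\zeta)$, we have $\gamma(t),\zeta(t)$ in a $\rho_t$-full set for a.e.\ $t$. With $d(t)=\gamma(t)-\zeta(t)$ this gives
\[
\tfrac{d}{dt}\tfrac12 d^2=(v(\gamma,t)-v(\zeta,t))d\le d^2/t ,
\]
so $(d^2/t^2)'\le 0$. The fact that \eqref{entropy} holds only for $\rho_t$-a.e.\ $x,y$ is handled by Fubini over $t$: the bad set has $\rho_t\otimes\rho_t$ measure zero for a.e.\ $t$, hence its preimage under $(e_t,e_t)$ has $\eta\otimes\eta$ measure zero. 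Thus $|d(t)|/t$ is nonincreasing for $\eta\otimes\eta$ a.e.\ pair. This inequality is closed under local uniform convergence, so it extends to all pairs in $\mathrm{supp}(\eta)\times\mathrm{supp}(\eta)$. For (v), take $\gamma(0)<\zeta(0)$ and suppose $\gamma(t_1)>\zeta(t_1)$. Continuity gives a first crossing time $s>0$ with $d(s)=0$. Applying (iv) from $s$ onward forces $d\equiv0$ on $[s,\infty)$, contradicting $d(t_1)\neq0$.

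Property (vi) is the main obstacle, and the plan is to expand the square. Write $\gamma(t)-\gamma(0)=\int_0^t\dot\gamma$. By Jensen and \eqref{SICtwo},
\[
\int\Big(\tfrac{\gamma(t)-\gamma(0)}{t}\Big)^2d\eta\le \tfrac1t\int_0^t\int v^2d\rho_s ds\le\int v_0^2d\rho_0 .
\]
The $\int v_0(\gamma(0))^2d\eta$ term equals $\int v_0^2d\rho_0$ by (iii). It therefore suffices to show that the cross term tends to the right limit:
\[
\int \tfrac{\gamma(t)-\gamma(0)}{t}v_0(\gamma(0))\,d\eta\to\int v_0^2d\rho_0 .
\]
First approximate $v_0$ in $L^2(\rho_0)$ by $g\in C^1_b$; the error is controlled uniformly in $t$ by the $L^2$ bound above. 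This reduces the cross term to
\[
\int\tfrac{\gamma(t)-\gamma(0)}{t}g(\gamma(0))d\eta=\tfrac1t\int_0^t\int v(\gamma(s),s)g(\gamma(0))d\eta ds .
\]
I would then replace $g(\gamma(0))$ by $g(\gamma(s))$. The error is bounded by $\|g'\|_\infty\|v\|\,\|\gamma(s)-\gamma(0)\|_{L^2(\eta)}$, and $\|\gamma(s)-\gamma(0)\|_{L^2(\eta)}\le s\|v_0\|$, so this error is $O(t)$. Finally, Proposition \ref{prop:cons-mom-equiv}(iii) and Lemma \ref{StrongRightCont} (the momentum limit in its proof) give
\[
\int g\,v(\cdot,s)d\rho_s\to\int gv_0d\rho_0 .
\]
Averaging over $[0,t]$ and letting $g\to v_0$ gives the limit, and hence (vi).
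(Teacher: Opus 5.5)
Your proposal is correct. For (iv)–(v) it is the paper's argument: Fubini in $t$ transfers the entropy inequality to $\eta\otimes\eta$-a.e.\ pair of curves, $(\gamma-\zeta)^2/t^2$ is then monotone, closedness passes this to $\textup{supp}(\eta)\times\textup{supp}(\eta)$, and the crossing argument gives (v).

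The genuine difference is in (vi). The paper first proves the stronger fact that $v(\gamma(s),s)\to v_0(\gamma(0))$ in $L^2(\eta)$ as an essential limit, swapping $h(\gamma(s))$ for $h(\gamma(0))$ by dominated convergence. Only afterwards does it average in time via Jensen. You average first, expand the square of the difference quotient, and make the swap quantitatively using $\|\gamma(s)-\gamma(0)\|_{L^2(\eta)}\le s\|v_0\|_{L^2(\rho_0)}$, which gives an explicit $O(t)$ error. Your version is a little shorter, but it yields only (vi) itself. The paper's intermediate strong convergence of velocities is reused later, to send $s\to0$ in the proof that $\dot X(t)=\E_{\rho_0}[v_0|X(t)]$. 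Running your estimate at fixed $s$, before averaging, recovers it.

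For (i)–(iii), the paper just cites an adaptation of Theorem 8.2.1 of \cite{AmbrosioGigliSavare2008}. Your tightness/diagonal gluing is the sound option there. Your first gluing route is misjustified as written. Property (v) says nothing about curves leaving a common atom of $\rho_0$, and ruling out branching there is not a consequence of (v); the paper handles it later using (iv) together with (vi). What actually makes $\eta^T$ unique is (iv): a zero of $\gamma-\zeta$ at a positive time propagates forward, so any two curves in $\textup{supp}(\eta^T)$ are pointwise ordered on $[0,T]$. Hence every finite-dimensional distribution of $\eta^T$ is the comonotone coupling of the $\rho_{t_i}$, and so it is determined by $(\rho_t)$.
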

\begin{proof}
1. Parts $(i)$, $(ii)$, and $(iii)$ follow from an adaptation of Theorem 8.2.1 of
\cite{AmbrosioGigliSavare2008}. As a result, we will omit the details. 

\par 2. It is routine to check that the entropy condition \eqref{entropy} implies $(\rho_t\otimes\rho_t)(E_t)=1$ for almost every $t>0$, where 
$$
E_t=\left\{ (x,y)\in \R^2: (v(x,t)-v(y,t))(x-y)\le\frac{1}{t}(x-y)^2\right\}.
$$
By part $(iii)$, 
\begin{align}
1&=\int^\infty_0\int_{\R^2}\chi_{E_t}(x,y)e^{-t}d(\rho_t\otimes\rho_t)(x,y)dt=\int^\infty_0\int_{\Gamma^2}\chi_{E_t}(\gamma(t),\zeta(t))e^{-t} d(\eta\otimes\eta)(\gamma,\zeta)dt.
\end{align} 
It follows that $(\gamma(t),\zeta(t))\in E_t$ for $\eta\otimes \eta$ almost every $(\gamma,\zeta)\in\Gamma^2$ and almost every $t>0$. And by parts $(i)$ and $(ii)$, 
\begin{align}
\frac{d}{dt}\frac{(\gamma(t)-\zeta(t))^2}{t^2}
&=\frac{2}{t^2}
\bigg[
\big(v(\gamma(t),t)-v(\zeta(t),t)\big)
\big(\gamma(t)-\zeta(t)\big)
-\frac{(\gamma(t)-\zeta(t))^2}{t}
\bigg]\\
&\le 0
\end{align}
for $\eta\otimes \eta$ almost every $(\gamma,\zeta)\in\Gamma^2$ and almost every $t>0$.    

\par Consequently,
\be\label{eq:quant-stickiness}
\frac{|\gamma(t)-\zeta(t)|}{t}
\le
\frac{|\gamma(s)-\zeta(s)|}{s}
\ee
for $\eta\otimes\eta$ almost every $(\gamma,\zeta)\in \Gamma^2$ and each fixed $0<s\le t$. Note that both sides of the above inequality are continuous in $(\gamma,\zeta)$. Therefore, the inequality holds for all $(\gamma,\zeta)\in \textup{supp}(\eta\otimes\eta)$. 
 And as
$$
\textup{supp}(\eta\otimes\eta)=\textup{supp}(\eta)\times\textup{supp}(\eta),
$$
the inequality actually holds for all
$\gamma,\zeta\in\textup{supp}(\eta)$. This proves $(iv)$.

\par Suppose now that $\gamma,\zeta\in\text{supp}(\eta)$
and $\gamma(0)<\zeta(0)$. If $\gamma(t)>\zeta(t)$ for
some $t>0$, continuity gives $s\in(0,t)$ with
$\gamma(s)=\zeta(s)$.  Inequality \eqref{eq:quant-stickiness} then
implies $\gamma(t)=\zeta(t)$, which is a contradiction.
Thus $\gamma(t)\le\zeta(t)$ for all $t\ge 0$, and we conclude $(v)$.

\par 3. We are left to verify $(vi)$.  We first show that
$$
\lim_{\substack{t\to0^+\\t\not\in N}}
\int_\Gamma
\big(v(\gamma(t),t)-v_0(\gamma(0))\big)^2d\eta(\gamma)
=0
$$
for a suitable Lebesgue null set $N\subset[0,\infty)$.
Choose $N$ so that the identity in part $(iii)$ of
Proposition \ref{prop:cons-mom-equiv} and the initial
kinetic energy bound hold for every $t\not\in N$.
By $(iii)$,
$$
\int_\Gamma v(\gamma(t),t)^2d\eta(\gamma)
=\int_\R v(x,t)^2d\rho_t(x)
\le\int_\R v_0^2d\rho_0
$$
for these times.

\par Fix $h\in C^1_c(\R)$. Part $(iii)$ of Proposition
\ref{prop:cons-mom-equiv} implies
$$
\lim_{\substack{t\to0^+\\t\not\in N}}
\int_\Gamma h(\gamma(t))v(\gamma(t),t)d\eta(\gamma)
=\int_\R hv_0d\rho_0.
$$
Dominated convergence also gives
$$
\lim_{t\to0^+}
\int_\Gamma
|h(\gamma(t))-h(\gamma(0))|^2d\eta(\gamma)=0.
$$
The Cauchy--Schwarz inequality then implies
\begin{align}
&\left|
\int_\Gamma
\big(h(\gamma(t))-h(\gamma(0))\big)
v(\gamma(t),t)d\eta(\gamma)
\right|\\
&\qquad\le
\left(
\int_\Gamma
|h(\gamma(t))-h(\gamma(0))|^2d\eta(\gamma)
\right)^{1/2}
\left(\int_\R v_0^2d\rho_0\right)^{1/2}
\to0
\end{align}
as $t\to0^+$ with $t\not\in N$. It follows that
$$
\lim_{\substack{t\to0^+\\t\not\in N}}
\int_\Gamma h(\gamma(0))v(\gamma(t),t)d\eta(\gamma)
=\int_\R hv_0d\rho_0.
$$

\par 4. The Cauchy--Schwarz inequality and the initial
kinetic energy bound give
\begin{align}
&\left|
\int_\Gamma
\big(v_0(\gamma(0))-h(\gamma(0))\big)
v(\gamma(t),t)d\eta(\gamma)
\right|\\
&\qquad\le
\left(
\int_\Gamma
|v_0(\gamma(0))-h(\gamma(0))|^2d\eta(\gamma)
\right)^{1/2}
\left(
\int_\Gamma v(\gamma(t),t)^2d\eta(\gamma)
\right)^{1/2}\\
&\qquad\le
\|v_0-h\|_{L^2(\rho_0)}\|v_0\|_{L^2(\rho_0)}
\end{align}
for $t\not\in N$. In particular, this estimate is
uniform in $t\not\in N$. Also observe
\begin{align}
&\left|
\int_\Gamma v_0(\gamma(0))v(\gamma(t),t)d\eta(\gamma)
-\int_\R v_0^2d\rho_0
\right|\\
&\qquad\le
\left|
\int_\Gamma
\big(v_0(\gamma(0))-h(\gamma(0))\big)
v(\gamma(t),t)d\eta(\gamma)
\right|\\
&\qquad\quad+
\left|
\int_\Gamma h(\gamma(0))v(\gamma(t),t)d\eta(\gamma)
-\int_\R hv_0d\rho_0
\right|+
\left|\int_\R(h-v_0)v_0d\rho_0\right|\\
&\qquad\le
2\|v_0-h\|_{L^2(\rho_0)}\|v_0\|_{L^2(\rho_0)}+
\left|
\int_\Gamma h(\gamma(0))v(\gamma(t),t)d\eta(\gamma)
-\int_\R hv_0d\rho_0
\right|.
\end{align}
Here we applied the preceding estimate to the first
term and the Cauchy--Schwarz inequality to the third.

\par For each fixed $h\in C^1_c(\R)$, the last
absolute value tends to zero as $t\to0^+$ with
$t\not\in N$. Consequently,
\begin{align}
\limsup_{\substack{t\to0^+\\t\not\in N}}
\left|
\int_\Gamma v_0(\gamma(0))v(\gamma(t),t)d\eta(\gamma)
-\int_\R v_0^2d\rho_0
\right|\le 2\|v_0-h\|_{L^2(\rho_0)}\|v_0\|_{L^2(\rho_0)}.
\end{align}
Since $C^1_c(\R)$ is dense in $L^2(\rho_0)$, the
right hand side can be made arbitrarily small.
We conclude that
$$
\lim_{\substack{t\to0^+\\t\not\in N}}
\int_\Gamma v_0(\gamma(0))v(\gamma(t),t)d\eta(\gamma)
=\int_\R v_0^2d\rho_0.
$$
It follows that
\begin{align}
\int_\Gamma
\big(v(\gamma(t),t)-v_0(\gamma(0))\big)^2d\eta(\gamma)
&\le
2\int_\R v_0^2d\rho_0
-2\int_\Gamma
v_0(\gamma(0))v(\gamma(t),t)d\eta(\gamma)
\to0
\end{align}
as $t\to0^+$ with $t\not\in N$.

\par 5. Finally, $(i)$ and $(ii)$, together with Jensen's
inequality and Tonelli's theorem, give
\begin{align}
\int_\Gamma
\left(
\frac{\gamma(t)-\gamma(0)}{t}-v_0(\gamma(0))
\right)^2d\eta(\gamma)\le
\frac{1}{t}\int^t_0\int_\Gamma
\big(v(\gamma(s),s)-v_0(\gamma(0))\big)^2
d\eta(\gamma)ds
\end{align}
for every $t>0$. The inner integral has essential
limit zero as $s\to0^+$ and is bounded above by
$4\int_\R v_0^2d\rho_0$ for almost every $s>0$.
Therefore,  we can apply dominated convergence and deduce $(vi)$.
\end{proof}
\begin{cor}
There is a sequence $(t_k)_{k\in \N}$ of positive numbers which decreases to $0$ such that 
\be\label{PointwiseDifferentiabilityZero}
\lim_{k\to \infty}\frac{\gamma(t_k)-\gamma(0)}{t_k}=v_0(\gamma(0))
\ee
for $\eta$ almost every $\gamma\in \Gamma$. 
\end{cor}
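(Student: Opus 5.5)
This is the standard extraction of an almost everywhere convergent subsequence from $L^2$ convergence, applied to part $(vi)$ of Lemma \ref{EtaLemma}. Define, for $t>0$,
$$
F_t(\gamma)=\frac{\gamma(t)-\gamma(0)}{t}-v_0(\gamma(0)),
$$
which is a Borel function on $\Gamma$. It is Borel because $e_t$ and $e_0$ are continuous and $v_0$ is Borel (modify $v_0$ on a $\rho_0$ null set if needed, using $e_0{}_\#\eta=\rho_0$ from part $(iii)$). Part $(vi)$ says $\|F_t\|_{L^2(\eta)}\to 0$ as $t\to 0^+$.

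The plan is as follows.

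First, I would choose $t_1>0$ with $\int_\Gamma F_{t_1}^2\,d\eta\le 4^{-1}$. Then, inductively, I would choose $0<t_{k+1}<\min\{t_k/2,\,1/(k+1)\}$ with
$$
\int_\Gamma F_{t_{k+1}}^2\,d\eta\le 4^{-(k+1)}.
$$
This is possible because the limit in $(vi)$ is a genuine limit as $t\to0^+$, not merely an essential one. The resulting sequence is strictly decreasing to $0$.

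Second, by Chebyshev's inequality,
$$
\eta\bigl(\{|F_{t_k}|>2^{-k/2}\}\bigr)\le 2^{k}\,4^{-k}=2^{-k},
$$
which is summable. The Borel--Cantelli lemma then gives that for $\eta$ almost every $\gamma$ we have $|F_{t_k}(\gamma)|\le 2^{-k/2}$ for all large $k$. Hence $F_{t_k}(\gamma)\to0$, which is exactly \eqref{PointwiseDifferentiabilityZero}.

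Alternatively, one could note that $\sum_k\int F_{t_k}^2\,d\eta<\infty$. Monotone convergence then gives $\sum_k F_{t_k}^2<\infty$ $\eta$-a.e., so $F_{t_k}\to0$ $\eta$-a.e.

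There is no real obstacle here. The only point requiring care is that the exceptional null set produced above depends on the chosen sequence, which is fine since the statement only asserts the existence of one sequence. Measurability of $F_t$ is also needed, and it was addressed at the start.
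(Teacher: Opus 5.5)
Your proposal is correct and follows the paper's approach: the paper simply observes that the $L^2(\eta)$ convergence in part $(vi)$ of Lemma \ref{EtaLemma} gives $\eta$-almost everywhere convergence along a subsequence. Your choice of $t_k$ with $\int_\Gamma F_{t_k}^2\,d\eta\le 4^{-k}$, followed by Chebyshev and Borel--Cantelli (or summability), just proves that standard fact explicitly, and your remarks on taking a Borel representative of $v_0$ and making $(t_k)$ strictly decreasing are sound.
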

\begin{proof}
This assertion follows from part $(vi)$ of the above lemma as $L^2(\eta)$ convergence implies almost everywhere convergence up to a subsequence. 
\end{proof}

\section{Lagrangian coordinate}\label{sect:lagrangian}
A basic assertion involving monotone functions is as follows. 
\begin{lem}
Suppose $A\subset \R$ and $f: A\to \R$ is nondecreasing. There is a nondecreasing $F: \R\to [-\infty,\infty]$ with $F|_{A}=f$. 
\end{lem}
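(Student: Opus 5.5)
The plan is to extend $f$ by a supremum construction. For $x\in\R$ set
$$
F(x)=\sup\{f(a): a\in A,\ a\le x\},
$$
with the convention $\sup\emptyset=-\infty$. This takes values in $[-\infty,\infty]$. The value $+\infty$ can only occur if $f$ is unbounded above on $A\cap(-\infty,x]$; since $f$ is nondecreasing and real valued, that happens only if $x\notin A$ and even then it is allowed. The steps, in order, are as follows.

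First, check monotonicity. If $x\le y$, then $\{a\in A: a\le x\}\subset\{a\in A: a\le y\}$, so $F(x)\le F(y)$ because a supremum over a larger set is larger.

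Second, check that $F|_A=f$. Fix $x\in A$. Then $x$ itself is admissible in the supremum, so $F(x)\ge f(x)$. Conversely, every admissible $a$ satisfies $a\le x$, so $f(a)\le f(x)$ by monotonicity of $f$ on $A$, giving $F(x)\le f(x)$. Hence $F(x)=f(x)$, which is finite.

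There is no real obstacle here. The only point requiring care is the handling of points below $\inf A$, where $F=-\infty$, and the allowance of $\pm\infty$ values in the codomain; that allowance is exactly why the statement permits $F:\R\to[-\infty,\infty]$. Had one wanted real values, one would instead need $f$ to be bounded on bounded sets, which fails in general. For instance, $A=(0,1)$ with $f(a)=-1/a$ forces $F(0)=-\infty$ at points to the left of $A$.
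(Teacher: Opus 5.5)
Your proposal is correct and is essentially the paper's proof: the same supremum construction $F(x)=\sup\{f(a): a\in A,\ a\le x\}$ with $\sup\varnothing=-\infty$, monotonicity by set inclusion, and $F(x)=f(x)$ for $x\in A$ from the two inequalities using admissibility of $x$ and monotonicity of $f$. Your remarks on when the values $\pm\infty$ occur are accurate but are not needed for the lemma.
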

\begin{proof}
For $x\in \R$, define
\[
F(x)=\sup\{f(a):a\in A,\ a\le x\},
\]
with the convention that \(\sup\varnothing=-\infty\). Then \(F:\mathbb R\to[-\infty,\infty]\). Also notice that if \(x<y\), then
\[
\{a\in A:a\le x\}\subseteq \{a\in A:a\le y\},
\]
so \(F(x)\le F(y)\). Thus \(F\) is nondecreasing.

Now let \(x\in A\). Since \(x\in A\) and \(x\le x\), we have \(F(x)\ge f(x)\). On the other hand, if \(a\in A\) and \(a\le x\), then the monotonicity of \(f\) gives \(f(a)\le f(x)\). Therefore
\[
F(x)=\sup\{f(a):a\in A,\ a\le x\}\le f(x).
\]
Hence \(F(x)=f(x)\). Therefore \(F|_A=f\).
\end{proof}

\par Suppose $\rho_0\in {\cal P}(\R)$ satisfies 
$$
\int_{\R}x^2d\rho_0(x)<\infty 
$$ 
and $v_0\in L^2(\rho_0)$.  For the remainder of this section, we will fix a weak solution pair $\rho$ and $v$ of \eqref{SPS} with initial conditions $\rho_0$ and $v_0$ and which satisfies \eqref{entropy} and \eqref{SICtwo}.
We will use the above lemma and Lemma \ref{EtaLemma} to show that there is a mapping $X$ associated with $\rho$ and $v$. 
\begin{prop}\label{prop:goodtrajectories}
There is a Lipschitz continuous 
$$
X: [0,\infty)\to L^2(\rho_0)
$$
with $X(t)$ nondecreasing for each $t\ge 0$, $\rho_t=X(t)_{\#}\rho_0$ for $t\ge 0$, and
\be
\begin{cases}
\dot X(t)=v(X(t),t), \text{a.e. $t\ge 0$}\\
X(0)=\id.
\end{cases}
\ee
\end{prop}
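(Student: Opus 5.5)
We will build $X$ from the path measure $\eta$ of Lemma \ref{EtaLemma}. The idea is that, off a $\rho_0$-null set, each starting point $x$ should determine a unique trajectory in $\textup{supp}(\eta)$. We then set $X(t)(x)=\gamma_x(t)$.

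\emph{Choosing trajectories.} Disintegrate $\eta$ with respect to $e_0$, writing $\eta=\int \eta_x\,d\rho_0(x)$; this uses part $(iii)$ at $t=0$. Let $A$ be the set of $x$ for which some $\gamma\in\textup{supp}(\eta)$ has $\gamma(0)=x$. Then $\rho_0(A)=1$, since $\eta$ is concentrated on its support. The first claim is that for $\rho_0$-a.e. $x$ the measure $\eta_x$ is a Dirac mass. To see this, fix two curves $\gamma,\zeta\in\textup{supp}(\eta)$ with $\gamma(0)=\zeta(0)=x$. Part $(iv)$ only gives $|\gamma(t)-\zeta(t)|\le t\,|\gamma(s)-\zeta(s)|/s$, which does not by itself force $\gamma=\zeta$. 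We therefore bring in the Corollary: along $t_k\to0$ we have $(\gamma(t_k)-x)/t_k\to v_0(x)$ for $\eta$-a.e. $\gamma$. Letting $s=t_k\to0$ in $(iv)$ gives
\[
\frac{|\gamma(t)-\zeta(t)|}{t}\le\lim_k\left|\frac{\gamma(t_k)-x}{t_k}-\frac{\zeta(t_k)-x}{t_k}\right|=0
\]
for $\eta_x\otimes\eta_x$-a.e. pair, for $\rho_0$-a.e. $x$. Hence $\eta_x=\delta_{\gamma_x}$ with $\gamma_x(0)=x$ for $x$ in a Borel set $A'$ of full $\rho_0$-measure. We then define $X(t)(x)=\gamma_x(t)$ on $A'$.

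\emph{Monotonicity and measurability.} For $x<y$ in $A'$, we may take $\gamma_x,\gamma_y\in\textup{supp}(\eta)$, and part $(v)$ gives $\gamma_x(t)\le\gamma_y(t)$. So $x\mapsto X(t)(x)$ is nondecreasing on $A'$. The extension lemma above then yields a nondecreasing function on $\R$, which we use as the representative of $X(t)$. Measurability is automatic for monotone functions. Moreover $\eta=(x\mapsto\gamma_x)_\#\rho_0$, so part $(iii)$ gives $\rho_t=e_t{}_\#\eta=X(t)_\#\rho_0$. In particular $\int X(t)^2d\rho_0=\int y^2d\rho_t<\infty$. The finiteness of this second moment follows from \eqref{SecondFiniteMoment} and the bound $|\gamma(t)|\le|\gamma(0)|+\sqrt t\|\dot\gamma\|_{L^2(0,t)}$. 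Finally $X(0)=\id$.

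\emph{Lipschitz continuity and the ODE.} By parts $(i)$ and $(ii)$, for $\rho_0$-a.e. $x$ we have
\[
\gamma_x(t)-\gamma_x(s)=\int_s^t v(\gamma_x(r),r)\,dr.
\]
Jensen's inequality and Tonelli then give
\[
\|X(t)-X(s)\|^2_{L^2(\rho_0)}\le (t-s)\int_s^t\!\int v^2\,d\rho_r\,dr\le (t-s)^2\|v_0\|^2_{L^2(\rho_0)},
\]
using \eqref{SICtwo}. So $X$ is Lipschitz with constant $\|v_0\|_{L^2(\rho_0)}$. Since $L^2(\rho_0)$ is a Hilbert space, $X$ is differentiable a.e. The difference quotients $(X(t+h)-X(t))/h$ are averages of $v(X(r),r)$. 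The map $r\mapsto v(X(r),r)$ lies in $L^2_{loc}([0,\infty);L^2(\rho_0))$, because $\int v(X(r),r)^2d\rho_0=\int v^2d\rho_r$; it is Bochner measurable, which follows from its joint measurability in $(x,r)$. By the Lebesgue differentiation theorem for Bochner integrals, $\dot X(t)=v(X(t),t)$ for a.e. $t$.

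The main obstacle is the first step: showing that $\eta_x$ is a single Dirac mass. That argument needs care with null sets. The Corollary holds only $\eta$-a.e. and only along one sequence $t_k$, so we must pass from an $\eta\otimes\eta$ statement to a fiberwise $\eta_x\otimes\eta_x$ statement via the disintegration. We also need the selection $x\mapsto\gamma_x$ to be Borel; taking $\gamma_x$ as the barycenter of $\eta_x$ should achieve this.
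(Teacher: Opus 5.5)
Your proposal is correct and follows essentially the paper's route: you set $X(\gamma(0),t)=\gamma(t)$, get well-definedness from part $(iv)$ plus the Corollary, get monotonicity from $(v)$ and the extension lemma, and get the pushforward, the Lipschitz bound and the ODE from $(iii)$, from Jensen with \eqref{SICtwo}, and from $(i)$--$(ii)$. The disintegration you flag as the main obstacle is unnecessary: the paper fixes one Borel set $S$ with $\eta(S)=1$ on which \eqref{PointwiseDifferentiabilityZero} holds for every curve, so $(iv)$ forces any two curves of $S$ with the same initial point to coincide outright. Your Bochner-integral justification of $\dot X(t)=v(X(t),t)$ in $L^2(\rho_0)$ is, if anything, more careful than the paper's pointwise-in-$y$ argument.
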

\begin{proof}
According to Lemma \ref{EtaLemma}, there is a Borel $S\subset \Gamma$ for which $\eta(S)=1$ and $\gamma\in AC^2_{\text{loc}}([0,\infty))$, $\dot \gamma(t)=v(\gamma(t),t)$ for almost every $t\ge 0$, $\gamma(0)< \zeta(0)$ implies $\gamma(t)\le \zeta(t)$ for $t\ge 0$,  and \eqref{PointwiseDifferentiabilityZero} holds for each $\gamma\in S$.  For $t\ge 0$, we define the function 
$$
X: e_0(S)\times[0,\infty)\to \R
$$
via 
\be\label{FlowSinglePath}
X(\gamma(0),t)=\gamma(t)
\ee
for $\gamma\in S$.  We claim that this function is well-defined. 

\par Suppose $\gamma,\zeta\in S$ with
$$
\gamma(0)=\zeta(0)=y.
$$
By our proof of Lemma \ref{EtaLemma} $(iv)$, 
$$
\frac{|\gamma(t)-\zeta(t)|}{t}\le \frac{|\gamma(s)-\zeta(s)|}{s}=
 \left|\frac{\gamma(s)-y}{s}-\frac{\zeta(s)-y}{s}\right|
$$
for $0<s\le t$. Choosing $s=t_k$ and sending $k\to\infty$ gives $\gamma(t)=\zeta(t)$. It follows that $X(t)$ is well-defined. 

\par Furthermore, $X(t): e_0(S)\to \R$ is nondecreasing. By the previous lemma, we extend this function to a monotone $X(t): \R\to [-\infty,\infty]$ that satisfies \eqref{FlowSinglePath} for $\gamma\in S$. 
Observe, if $f\in C_b(\R)$
\begin{align}
\int_\R fd\rho_t&=\int_\Gamma f(\gamma(t))d\eta(\gamma)\\
&=\int_\Gamma f(X(\gamma(0),t))d\eta(\gamma)\\
&=\int_\R f(X(t))d\rho_0.
\end{align}
We conclude that $\rho_t=X(t)_{\#}\rho_0$ for $t\ge 0$.  Also note
\begin{align}
\int_{\Gamma}( \gamma(t)-\gamma(s))^2d\eta(\gamma)
&=\int_{\Gamma}\left( \int^t_s v(\gamma(\tau),\tau)d\tau \right)^2d\eta(\gamma)\\
&\le (t-s)\int^t_s \left(\int_{\R} v(X(\tau),\tau)^2d\rho_0\right) d\tau\\
&= (t-s)\int^t_s \left(\int_{\R} v(x,\tau)^2d\rho_\tau(x)\right) d\tau\\
&\le (t-s)^2\int_{\R} v_0^2d\rho_0.
\end{align}

\par Thus, 
$$
\int_{\R}( X(t)-X(s))^2d\rho_0\le (t-s)^2\int_{\R} v_0^2d\rho_0
$$
for $t,s\ge 0$.  And as 
$$
\int_{\R} X(0)^2d\rho_0=\int_{\R}x^2d\rho_0(x)<\infty,
$$
$X(t)\in L^2(\rho_0)$ for each $t\ge 0$.  Furthermore, $X:[0,\infty)\to L^2(\rho_0)$ is Lipschitz continuous. 

\par In addition, 
\begin{align}
0&=\int_\Gamma \left|\gamma(t)-\gamma(0)-\int^t_0 v(\gamma(s),s)ds\right|d\eta(\gamma)\\
&=\int_\Gamma \left|X(\gamma(0),t)-\gamma(0)-\int^t_0 v(X(\gamma(0),s),s)ds\right|d\eta(\gamma)\\
&=\int_\R \left|X(y,t)-y-\int^t_0 v(X(y,s),s)ds\right|d\rho_0(y).
\end{align}
It follows that $X(y,0)=y$ and $\partial_t X(y,t)=v(X(y,t),t)$ a.e. $t\ge 0$ for $\rho_0$ almost every $y\in \R$. 
\end{proof}

\par We can also show that if $s\le t$ and $X(y,s)=X(z,s)$, then $X(y,t)=X(z,t)$ for $\rho_0$ almost every $y,z$.  This property is implied 
by the following assertion. 
\begin{prop}\label{PropStickyFunction}
Fix $t>0$. There is a Borel measurable function $f: \R\times(0,t]\to \R$ with 
$$
X(t)=f(X(s),s)\;\text{ $\rho_0$ a.e.}
$$
for each $s\in (0,t]$. Moreover, 
\be\label{eq:f-lip-x}
0\le f(x_1,s)-f(x_2,s)\le \frac{t}{s}(x_1-x_2)
\ee
for $x_1\ge x_2$ and $0<s\le t$. 
\end{prop}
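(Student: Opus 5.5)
We build $f(\cdot,s)$ from the support of $\eta$ and use part $(iv)$ of Lemma \ref{EtaLemma} as the Lipschitz estimate. Fix $t>0$ and let $S$ be the Borel set of full $\eta$ measure from the proof of Proposition \ref{prop:goodtrajectories}; we may assume $S\subset\textup{supp}(\eta)$. For $0<s\le t$ let
$$
A_s=\{\gamma(s):\gamma\in S\}\subset\R .
$$
On $A_s$ define $f_0(\gamma(s),s)=\gamma(t)$ for $\gamma\in S$.

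This is well defined. If $\gamma(s)=\zeta(s)$, then part $(iv)$ gives
$$
\frac{|\gamma(t)-\zeta(t)|}{t}\le \frac{|\gamma(s)-\zeta(s)|}{s}=0 .
$$

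It is also monotone and Lipschitz on $A_s$. Take $\gamma(s)>\zeta(s)$. Then $\gamma(0)\ge\zeta(0)$, since otherwise part $(v)$ would give $\gamma(s)\le\zeta(s)$. If $\gamma(0)>\zeta(0)$, part $(v)$ with the roles swapped gives $\gamma(t)\ge\zeta(t)$. If $\gamma(0)=\zeta(0)$, the well-definedness argument of Proposition \ref{prop:goodtrajectories} gives $\gamma=\zeta$ on $[0,\infty)$, which is impossible. Hence $\gamma(t)\ge \zeta(t)$, and part $(iv)$ then gives
$$
0\le \gamma(t)-\zeta(t)\le \frac{t}{s}\bigl(\gamma(s)-\zeta(s)\bigr).
$$
So $f_0(\cdot,s)$ is nondecreasing and $t/s$-Lipschitz on $A_s$. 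It extends uniquely to the closure $\overline{A_s}$, and then to $\R$ by linear interpolation on the bounded gaps and by constants on the unbounded ones. This extension is real valued and still satisfies \eqref{eq:f-lip-x}. Finally, for $\rho_0$ a.e. $y=\gamma(0)$ with $\gamma\in S$ we have $X(y,s)=\gamma(s)\in A_s$, so $f(X(y,s),s)=\gamma(t)=X(y,t)$. This gives $X(t)=f(X(s),s)$ $\rho_0$ a.e.

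The main obstacle is joint Borel measurability of $f$ on $\R\times(0,t]$, because the sets $A_s$ vary with $s$ and need not be Borel. To avoid this, I would define $f$ by an explicit formula. Note first that $\textup{supp}(\eta)$ is closed, and that the estimates of $(iv)$ and $(v)$ hold on all of it. So we may replace $S$ by $K=\textup{supp}(\eta)$ and define
$$
f(x,s)=\inf_{\gamma\in D}\Bigl(\gamma(t)+\frac{t}{s}\,\bigl(x-\gamma(s)\bigr)^+\Bigr),
$$
where $D\subset K$ is a countable dense subset and $(\cdot)^+$ denotes the positive part. This is a McShane-type extension adapted to monotonicity. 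Each term is continuous in $(x,s)$, so $f$ is Borel as a countable infimum of continuous functions.

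It remains to check three things.
\begin{enumerate}[(a)]
\item $f$ agrees with $\zeta(t)$ at $x=\zeta(s)$ for every $\zeta\in K$. The upper bound comes from density of $D$ and continuity of the evaluation maps. The lower bound comes from the pairwise estimate above, which gives $\gamma(t)+\frac{t}{s}(\zeta(s)-\gamma(s))^+\ge\zeta(t)$ for every $\gamma\in K$.
\item $f$ is nondecreasing in $x$, since each term is.
\item $f(x_1,s)-f(x_2,s)\le \frac{t}{s}(x_1-x_2)$ for $x_1\ge x_2$, since $(\cdot)^+$ is $1$-Lipschitz and each term therefore has this increment bound.
\end{enumerate}

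Finiteness of $f$ also needs attention. For $x$ below $\inf e_s(K)$ the infimum may be $-\infty$. In that case I would redefine $f$ there by the constant value $\inf_{\gamma\in K}\gamma(t)$, or simply replace $f$ by $\max\{f,\inf_{\gamma\in D}\gamma(t)\}$ when this infimum is finite. This is harmless, because $X(s)\in e_s(K)$ $\rho_0$ a.e., so $f$ is only evaluated there. If $\inf_K\gamma(t)=-\infty$ (an unbounded support), I would instead truncate by setting $f=\max\{f,\,x-R\}$-type modifications, or handle the case with a monotone limit in $R$; this is the technical point I expect to be the main obstacle.
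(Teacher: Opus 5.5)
Your construction is the paper's: the paper also defines $f$ as an infimum of $\gamma(t)+\frac{t}{s}(x-\gamma(s))^+$. It takes the infimum over the whole full-measure set $A$ of initial points, $f(x,s)=\inf_{y\in A}\{X(y,t)+\frac{t}{s}(x-X(y,s))^+\}$. That choice makes the upper bound $f(X(z,s),s)\le X(z,t)$ immediate (take $y=z$). It also gives Borel measurability directly, because an infimum of any family of functions continuous in $(x,s)$ is upper semicontinuous. Your countable dense $D$ works too but is not needed.

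\textbf{Finiteness is not an obstacle.} You leave the proof unfinished at the step you call the main obstacle, but that step is easy. Your assertion that the infimum may be $-\infty$ for $x<\inf e_s(K)$ is false. Fix any $\zeta\in K$. For every $\gamma\in D$, your one-sided estimate together with $(\zeta(s)-\gamma(s))^+\le(\zeta(s)-x)^+ + (x-\gamma(s))^+$ gives
\[
\gamma(t)+\frac{t}{s}\bigl(x-\gamma(s)\bigr)^+\ \ge\ \zeta(t)-\frac{t}{s}\bigl(\zeta(s)-x\bigr)^+ .
\]
Hence $f(x,s)\ge\zeta(t)-\frac{t}{s}(\zeta(s)-x)^+>-\infty$ for all $x\in\R$ and $0<s\le t$, whether or not the support is bounded. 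This is exactly how the paper shows $f$ is real valued, and it is also what makes the subtraction in your item (c) meaningful. Discard the truncations. They are unnecessary, and $\max\{f,x-R\}$ may change $f$ at points $\zeta(s)$ with $\zeta(s)-\zeta(t)>R$, which would destroy $f(\zeta(s),s)=\zeta(t)$.

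\textbf{The switch from $S$ to $K$.} A second, smaller repair is needed when you replace $S$ by $K=\textup{supp}(\eta)$. Your proof of $\zeta(t)-\gamma(t)\le\frac{t}{s}(\zeta(s)-\gamma(s))^+$ handled the case $\gamma(0)=\zeta(0)$ by the well-definedness argument of Proposition \ref{prop:goodtrajectories}. That argument uses \eqref{PointwiseDifferentiabilityZero}, so it is available only on $S$, not on all of $K$. There are two easy fixes.
\begin{enumerate}[(1)]
\item Take $D$ countable and dense in $S$, and run (a)--(c) for $\zeta\in S$. This suffices, since $X(s)\in e_s(S)$ holds $\rho_0$ a.e.
\item Get order preservation on $K$ from (iv) alone. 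If $\gamma(s)>\zeta(s)$ but $\gamma(t)<\zeta(t)$, the intermediate value theorem gives $r\in(s,t)$ with $\gamma(r)=\zeta(r)$. Then (iv) applied to $r\le t$ forces $\gamma(t)=\zeta(t)$, a contradiction.
\end{enumerate}
With these two repairs your argument is complete and coincides with the paper's.
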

\begin{proof}
By Proposition \ref{prop:goodtrajectories}, there is Borel $A\subset \R$ such that $\rho_0(A)=1$, $X(y,\cdot)$ is absolutely continuous, $X(y,0)=y$, and 
$$
\partial_t X(y,t)=v(X(y,t),t), \;\text{a.e. $t\ge 0$}
$$
for all $y\in A$.  Employing part $(iv)$ of Lemma \ref{EtaLemma}, we find
\be\label{QuantitativeStickiness}
\frac{|X(y,t)-X(z,t)|}{t}\le \frac{|X(y,s)-X(z,s)|}{s}
\ee
for $y,z\in A$ and $0< s\le t$.  

\par Set 
$$
f(x,s)=\inf\bigg\{X(y,t)+\frac{t}{s}(x-X(y,s))^+: y\in A\bigg\}
$$
for $(x,s)\in \R\times(0,t]$. We will first show that $f$ is well-defined. Note that for $z\in A$ and $0<s\le t$, 
\be\label{QuantitativeStickinessTwo}
X(z,t)-X(y,t)
\le \frac ts\bigl(X(z,s)-X(y,s)\bigr)^+
\qquad(y\in A).
\ee
Indeed, this follows from \eqref{QuantitativeStickiness} for $y\le z$; and if $y\ge z$, the left-hand side is nonpositive. Moreover, 
$$
\bigl(X(z,s)-X(y,s)\bigr)^+
\le \bigl(X(z,s)-x\bigr)^+
   +\bigl(x-X(y,s)\bigr)^+.
$$
Consequently, 
$$
X(y,t)+\frac ts\bigl(x-X(y,s)\bigr)^+
\ge X(z,t)-\frac ts\bigl(X(z,s)-x\bigr)^+.
$$
It follows that 
$$
f(x,s)\ge X(z,t)-\frac ts\bigl(X(z,s)-x\bigr)^+>-\infty
$$
for $x\in \R$ and $0<s\le t$.  

\par It is also evident that $f$ is upper semicontinuous and that \eqref{eq:f-lip-x} holds as stated.  Furthermore, 
for $z\in A$ and $s\in (0,t]$,
$$
f(X(z,s),s)\le X(z,t)
$$
by choosing $y=z$ in the definition of $f$.  By \eqref{QuantitativeStickinessTwo}, we also have 
$$
X(y,t)+\frac{t}{s}(X(z,s)-X(y,s))^+\ge X(z,t)
$$
for $y\in A$.  Thus, 
$$
f(X(z,s),s)\ge X(z,t).
$$
\end{proof}

\par We can now express the conservation of momentum in terms of $X$. 
\begin{prop}
For almost every $t\ge 0$, $$\dot X(t)=\E_{\rho_0}[v_0|X(t)].$$
\end{prop}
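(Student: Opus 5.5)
Since $\dot X(t)=v(X(t),t)$ for a.e.\ $t$ by Proposition \ref{prop:goodtrajectories}, and $v(X(t),t)$ is $X(t)$ measurable, it suffices to show that for a.e.\ $t>0$,
$$
\int_\R h(X(t))\,v(X(t),t)\,d\rho_0=\int_\R h(X(t))\,v_0\,d\rho_0
$$
for every Borel $h$ with $h(X(t))\in L^2(\rho_0)$. The plan is to integrate the momentum equation along the Lagrangian flow. We will do this with test functions adapted to the sticky structure of Proposition \ref{PropStickyFunction}, so that all $h(X(t))$ are eventually covered.

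\textbf{Step 1.} Fix a time $t\notin N$ (full measure), where $N$ is chosen as in Proposition \ref{prop:cons-mom-equiv}(iii). Take $g\in C^1_b(\R)$ and let $f$ be the function of Proposition \ref{PropStickyFunction} for this $t$. Put $\phi(x,s)=g(f(x,s))$ for $0<s\le t$, and mollify in $x$ if necessary. Then $\phi(X(s),s)=g(X(t))$ $\rho_0$-a.e.\ for every $s\in(0,t]$. Formally, along trajectories $\partial_s\phi+v\partial_x\phi=\frac{d}{ds}\phi(X(s),s)=0$. Applying Proposition \ref{prop:cons-mom-equiv}(i) on $[\sigma,t]$ (in difference form) would therefore give
$$
\int g(X(t))v(X(t),t)\,d\rho_0=\int g(X(t))\,v(X(\sigma),\sigma)\,d\rho_0
$$
for a.e.\ $\sigma<t$.

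Then let $\sigma\to0^+$. Lemma \ref{EtaLemma}, Step 3 of its proof, gives $v(X(\sigma),\sigma)\to v_0$ in $L^2(\rho_0)$ along a.e.\ $\sigma$. Since $g(X(t))$ is bounded, we obtain $\int g(X(t))v(X(t),t)\,d\rho_0=\int g(X(t))v_0\,d\rho_0$.

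\textbf{Step 2.} Extend from $g\in C^1_b$ to all Borel $h$ with $h(X(t))\in L^2(\rho_0)$. Both sides are continuous in $h(X(t))$ in $L^2(\rho_0)$, because $v(X(t),t)$ and $v_0$ lie in $L^2$. Moreover, $\{g(X(t)):g\in C^1_b\}$ is dense in $L^2$ of the $\sigma$-algebra generated by $X(t)$, by density of $C^1_b$ in $L^2(X(t)_\#\rho_0)=L^2(\rho_t)$.

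\textbf{Main obstacle.} The hard step is justifying Step 1 rigorously. The function $\phi=g\circ f$ is only Lipschitz in $x$ (with constant $t/s$, blowing up as $s\to0$), is not $C^1$, and is defined only for $s\le t$. I would avoid a direct PDE verification by working in Lagrangian form. For each $\sigma<t$, write the momentum identity as $\int_\R \psi(x)v(x,\tau)\,d\rho_\tau\big|_{\sigma}^{\tau}$ with time-dependent $C^1_b$ tests, obtained by mollifying $f(\cdot,s)$ in $x$ (the Lipschitz bound $t/s$ is uniform on $[\sigma,t]$) and piecewise-constant or mollified in $s$. The error term is
$$
\int\!\!\int v\bigl(\partial_s\phi_\epsilon+v\partial_x\phi_\epsilon\bigr)\,d\rho_s\,ds=\int\!\!\int v(X(s),s)\,\frac{d}{ds}\phi_\epsilon(X(s),s)\,d\rho_0\,ds.
$$
We show it tends to zero using $\phi_\epsilon(X(s),s)\to g(X(t))$ (which is constant in $s$) and the bound $\int v^2d\rho_s\le\int v_0^2d\rho_0$.

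A cleaner alternative I would try first is to discretize time, $\sigma=s_0<\dots<s_n=t$, and use Proposition \ref{prop:cons-mom-equiv}(iii) on each interval with the time-independent test $g\circ f(\cdot,s_{j+1})$, suitably smoothed. The drift term $\int\!\!\int v^2\,\partial_x(g\circ f)$ must then be controlled using the one-sided Lipschitz bound \eqref{eq:f-lip-x}, and showing that its total contribution vanishes as the mesh shrinks is the delicate estimate.
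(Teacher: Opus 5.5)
\textbf{There is a genuine gap, at exactly the step you flag as the main obstacle.} Your reduction and endpoints match the paper: testing against $h(X(t))$, writing $g(X(t))=g(f(X(s),s))$ via Proposition \ref{PropStickyFunction}, passing $\sigma\to0^+$ with Step 3 of the proof of Lemma \ref{EtaLemma}, and the final density argument are all fine. The problem is the central claim, that $m(\sigma)=\int_\R g(X(t))\,v(X(\sigma),\sigma)\,d\rho_0$ does not depend on $\sigma$. You never prove it, and neither sketch proves it as written.

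In (a), the convergence $\phi_\epsilon(X(s),s)\to g(X(t))$ only gives $\frac{d}{ds}\phi_\epsilon(X(s),s)\to0$ in the sense of distributions in $s$. That cannot be paired with $v(X(s),s)$, which has no known time regularity. You would need something like a uniform $L^2(\rho_0\otimes ds)$ bound on $\partial_s\phi_\epsilon(X(s),s)$. The energy bound gives nothing of the kind, because after mollification in $x$ this quantity samples $f(\cdot,s)$ off $\textup{supp}(\rho_s)$.

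In (b), your test function is in fact the paper's, but you aim at the wrong target. With the time-independent test $\psi_j\approx g\circ f(\cdot,s_{j+1})$ on $[s_j,s_{j+1}]$, the lower endpoint produces $\int_\R\psi_j(X(s_j))\,v(X(s_j),s_j)\,d\rho_0$. Since $\psi_j(X(s_j))\neq g(X(t))$, nothing telescopes. Moreover, the drift terms do not vanish as the mesh shrinks. Take $\rho_0$ uniform on $[0,1]$, $v_0(x)=x$ (no collisions, so $v(X(\tau),\tau)=v_0$), and $g'=1$ on $[0,1+t]$. Then the drift sum tends to $\frac{1+t}{3}\log\frac{1+t}{1+\sigma}\neq0$, while $m$ is constant. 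So the drift must cancel against the endpoint mismatch rather than vanish on its own.

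\textbf{The missing idea is that cancellation.} Fix $\sigma\le a<b\le t$ and let $\psi\in C^1_b(\R)$ be a mollification of $g\circ f(\cdot,b)$.
\begin{enumerate}[(1)]
\item Use the chain rule along trajectories: $\psi(X(b))=\psi(X(a))+\int_a^b\psi'(X(\tau))v(X(\tau),\tau)\,d\tau$.
\item Multiply by $v(X(a),a)$, integrate against $\rho_0$, and subtract the result from the momentum identity of Proposition \ref{prop:cons-mom-equiv}(iii) on $[a,b]$. This gives
\[
\int_\R\psi(X(b))\bigl(v(X(b),b)-v(X(a),a)\bigr)d\rho_0=\int_a^b\!\!\int_\R\psi'(X(\tau))\,v(X(\tau),\tau)\bigl(v(X(\tau),\tau)-v(X(a),a)\bigr)d\rho_0\,d\tau .
\]
\item Bound the right side by $\frac{t}{b}\|g'\|_\infty\|v_0\|_{L^2(\rho_0)}\int_a^b\|v(X(\tau),\tau)-v(X(a),a)\|_{L^2(\rho_0)}d\tau$. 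Here \eqref{eq:f-lip-x} is used only to bound $\|\psi'\|_\infty$ uniformly in the mollification.
\item Let the mollification parameter go to zero, which replaces $\psi(X(b))$ by $g(X(t))$.
\end{enumerate}
This shows that $m$ is Lipschitz on $[\delta,t]$. It also has zero derivative at every Lebesgue point of the Bochner function $\tau\mapsto v(X(\tau),\tau)\in L^2(\rho_0)$, so $m$ is constant. This is precisely the paper's argument, and with it your Steps 1 and 2 go through.
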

\begin{proof}
Let us choose ${\cal T}\subset (0,\infty)$ of full Lebesgue measure such that the following properties hold for each $t\in {\cal T}$. 

\begin{enumerate}[$(i)$]
\item $v(X(t),t)\in L^2(\rho_0)$ with 
$$
\int_{\R}v(X(t),t)^2d\rho_0\le \int_{\R}v_0^2d\rho_0.
$$

\item $X:[0,\infty)\to L^2(\rho_0)$ is differentiable at $t$ with $\dot X(t)=v(X(t),t)$.

\item For each $g\in C^1_b(\R)$, 
$$
\int_\R g(x)v(x,t)d\rho_t(x)=\int_\R g(x)v_0(x)d\rho_0(x)
+\int^t_0\int_\R g'(x)v(x,s)^2d\rho_s(x)ds.
$$

\item 
$$
\lim_{\delta\to 0}\frac{1}{\delta}\int^{t+\delta}_t\|v(X(\tau),\tau)-v(X(t),t)\|_{L^2(\rho_0)}d\tau=0.
$$
\end{enumerate}
We note that $(iv)$ holds for almost every $t\ge 0$ by Lebesgue differentiation; see Theorem 9 of section 2 in Chapter II of 
\cite{DiestelUhl1977}. 

\par Next fix $h\in C^1_b(\R)$ and $T\in {\cal T}$. We will show 
\be\label{WTSidentity}
\int_{\R}h(X(T))v(X(T),T)d\rho_0=\int_{\R}h(X(T))v_0d\rho_0.
\ee
If we can verify this assertion, we would have 
$$
v(X(T),T)=\E_{\rho_0}[v_0|X(T)]
$$
by the density of $C^1_b(\R)$ in $L^2(\rho_T)$. 

\par In view of Proposition \ref{PropStickyFunction}, we may choose a Borel $f: \R\times (0,T]\to \R$ with 
$$
X(T)=f(X(s),s)\;\text{$\rho_0$ a.e.}
$$
for every $s\in (0,T]$ and
$$
|f(x,s)-f(y,s)|\le \frac{T}{s}|x-y|
$$ 
for $x,y\in \R$ and $s\in (0,T].$ It follows that 
$$
|h(f(x,s))-h(f(y,s))|\le \|h'\|_\infty \frac{T}{s}|x-y|, \quad x,y\in \R. 
$$
Let us fix $t\in {\cal T}\cap(0,T]$ and choose a sequence $g^n\in C^1_b(\R)$ such that 
$$
g^n\to h(f(\cdot, t))\text{ uniformly and } \|(g^n)'\|_\infty\le \|h'\|_\infty \frac{T}{t}.
$$
This sequence can be obtained by using a standard mollifier.

\par Suppose $s\in {\cal T}$ with $s<t$  and note 
$$
\int_\R g^n(x)v(x,t)d\rho_t(x)=\int_\R g^n(x)v(x,s)d\rho_s(x)
+\int^t_s\int_\R (g^n)'(x)v(x,\tau)^2d\rho_\tau(x)d\tau.
$$
In particular, 
\be\label{SpecificConsMom}
\int_\R g^n(X(t))v(X(t),t)d\rho_0=\int_\R g^n(X(s))v(X(s),s)d\rho_0(x)
+\int^t_s\int_\R (g^n)'(X(\tau))v(X(\tau),\tau)^2d\rho_0d\tau.
\ee
We also have 
$$
g^n(X(t))=g^n(X(s))+\int^t_s (g^n)'(X(\tau))v(X(\tau),\tau)d\tau.
$$
Multiplying by $v(X(s),s)$ and integrating gives 
\begin{align}\label{SpecificConsMom2}
&\int_\R g^n(X(t))v(X(s),s)d\rho_0\\
&=\int_\R g^n(X(s))v(X(s),s)d\rho_0(x)
+\int^t_s\int_\R (g^n)'(X(\tau))v(X(\tau),\tau)v(X(s),s) d\rho_0d\tau.
\end{align}

\par Subtracting \eqref{SpecificConsMom2} from \eqref{SpecificConsMom} yields
\begin{align}
&\int_\R g^n(X(t))\left(v(X(t),t)-v(X(s),s)\right)d\rho_0\\
&=\int^t_s\int_\R (g^n)'(X(\tau))v(X(\tau),\tau)(v(X(\tau),\tau)-v(X(s),s))d\rho_0d\tau.
\end{align}
Moreover, 
\begin{align}
&\left|\int_\R g^n(X(t))\left(v(X(t),t)-v(X(s),s)\right)d\rho_0\right|\\
& \le \|h'\|_\infty \frac{T}{t} \int^t_s\|v(X(\tau),\tau)\|_{L^2(\rho_0)}\|v(X(\tau),\tau)-v(X(s),s)\|_{L^2(\rho_0)}d\tau\\
& \le \|h'\|_\infty \frac{T}{t}\|v_0\|_{L^2(\rho_0)} \int^t_s\|v(X(\tau),\tau)-v(X(s),s)\|_{L^2(\rho_0)}d\tau.
\end{align}
And sending $n\to \infty$ gives 
\begin{align}
&\left|\int_\R h(f(X(t),t))\left(v(X(t),t)-v(X(s),s)\right)d\rho_0\right|\\
& \le \|h'\|_\infty \frac{T}{t}\|v_0\|_{L^2(\rho_0)} \int^t_s\|v(X(\tau),\tau)-v(X(s),s)\|_{L^2(\rho_0)}d\tau.
\end{align}
That is, 
\begin{align}\label{FirstMfunEst}
&\left|\int_\R h(X(T))v(X(t),t)d\rho_0 -\int_\R h(X(T))v(X(s),s)d\rho_0 \right|\\
& \le \|h'\|_\infty \frac{T}{t}\|v_0\|_{L^2(\rho_0)} \int^t_s\|v(X(\tau),\tau)-v(X(s),s)\|_{L^2(\rho_0)}d\tau
\end{align}
for $t,s\in {\cal T}$ with $0<s\le t\le T$.

\par Now fix $\delta\in (0,T]$ and set 
$$
m(t)=\int_\R h(X(T))v(X(t),t)d\rho_0
$$
for $t\in{\cal T} \cap [\delta,T]$. Note that 
$$
\|v(X(\tau),\tau)-v(X(s),s)\|_{L^2(\rho_0)}\le \|v(X(\tau),\tau)\|_{L^2(\rho_0)}+\|v(X(s),s)\|_{L^2(\rho_0)}
\le 2\|v_0\|_{L^2(\rho_0)}
$$
for almost every $\tau,s\ge 0$. In view of \eqref{FirstMfunEst}, 
\begin{align}
&\left|m(t)-m(s) \right| \le \|h'\|_\infty \frac{2T}{\delta}\|v_0\|^2_{L^2(\rho_0)} |t-s|
\end{align}
for $t,s\in {\cal T}\cap [\delta,T]$.  Thus, $m: {\cal T}\cap [\delta,T]\to \R$ is a Lipschitz function. 

\par It follows that $m$ has a Lipschitz extension to $[\delta,T]$ which we also denote by $m$. We conclude 
that $m$ is absolutely continuous. Furthermore, \eqref{FirstMfunEst} implies that if $t,s\in {\cal T}\cap (\delta,T)$ with $t> s$,
then
\begin{align}
\left|\frac{m(t)-m(s)}{t-s}\right| \le \|h'\|_\infty \frac{T}{\delta}\|v_0\|_{L^2(\rho_0)} \frac{1}{t-s} \int^t_s\|v(X(\tau),\tau)-v(X(s),s)\|_{L^2(\rho_0)}d\tau.
\end{align}
Since $s\in {\cal T}$, the right hand side tends to $0$ as $t\to s^+$. 

\par Therefore, $m'(s)=0$ for almost every $s\in [\delta,T]$. Consequently, 
$$
\int_\R h(X(T))v(X(T),T)d\rho_0=\int_\R h(X(T))v(X(s),s)d\rho_0
$$
for almost every $s\in [\delta,T]$. As $\delta$ is arbitrary, we can send $s$ to $0$ along an appropriate sequence to conclude 
\eqref{WTSidentity}. Here we are using the strong convergence $v(\gamma(s),s)\to v_0(\gamma(0))$ in $L^2(\eta)$ along a subsequence of 
$s$ tending to $0$ which was verified in our proof of Lemma \ref{EtaLemma}. 
\end{proof}

\section{Uniqueness}\label{sect:uniqueness}
Assume $\rho_0\in {\cal P}(\R)$ has a finite second moment and $v_0\in L^2(\rho_0)$.  Let us also suppose $\rho$ and $v$ is a weak solution pair of \eqref{SPS} with initial conditions \eqref{IC} and which satisfies \eqref{entropy} and \eqref{SICtwo}. As discussed in the previous section, there is a Lipschitz continuous $X: [0,\infty)\to L^2(\rho_0)$   which solves 
\be
\begin{cases}
\displaystyle\dot X(t)=\E_{\rho_0}[v_0|X(t)]\text{ a.e. $t\ge 0$}\\
\displaystyle X(0)=\text{id}_{\R}.
\end{cases}
\ee

\par We will derive a few more properties of $X$ that will allow us to conclude that this weak solution pair is uniquely specified. First we note that $X$ is differentiable at $t=0$. 
\begin{lem}
The limit  
\be\label{differentiabilityZero}
\lim_{t\rightarrow 0^+}\frac{X(t)-\id}{t}=v_0
\ee
holds in $L^2(\rho_0)$. 
\end{lem}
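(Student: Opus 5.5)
This limit is essentially part $(vi)$ of Lemma \ref{EtaLemma}, transported from path space to $L^2(\rho_0)$ via the construction of $X$ in Proposition \ref{prop:goodtrajectories}. The plan is to rewrite the integrand in terms of paths and then quote that part.

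First, recall from the proof of Proposition \ref{prop:goodtrajectories} that there is a Borel $S\subset\Gamma$ with $\eta(S)=1$ and $X(\gamma(0),t)=\gamma(t)$ for all $\gamma\in S$ and $t\ge0$. Moreover, $\rho_0=e_0{}_{\#}\eta$ by part $(iii)$ of Lemma \ref{EtaLemma}. Hence, for each $t>0$, the change of variables $y=\gamma(0)$ gives
\begin{align}
\int_\R\left(\frac{X(y,t)-y}{t}-v_0(y)\right)^2d\rho_0(y)
&=\int_\Gamma\left(\frac{X(\gamma(0),t)-\gamma(0)}{t}-v_0(\gamma(0))\right)^2d\eta(\gamma)\\
&=\int_\Gamma\left(\frac{\gamma(t)-\gamma(0)}{t}-v_0(\gamma(0))\right)^2d\eta(\gamma).
\end{align}
The first equality is the pushforward identity applied to the nonnegative Borel function $y\mapsto \bigl((X(y,t)-y)/t-v_0(y)\bigr)^2$. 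Only $\rho_0$-a.e.\ values of $X(t)$ and $v_0$ enter, so the choice of monotone extension of $X(t)$ off $e_0(S)$ and of the representative of $v_0$ is irrelevant. The second equality uses that the integrands agree on $S$. By part $(vi)$ of Lemma \ref{EtaLemma}, the last integral tends to $0$ as $t\to0^+$, which is exactly \eqref{differentiabilityZero}.

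The only step needing care is measurability. The function $y\mapsto X(y,t)$ is the monotone extension from the previous lemma, so it is Borel, and $v_0$ may be taken Borel. Then $\gamma\mapsto v_0(\gamma(0))$ is $\eta$-measurable, and the pushforward formula applies with no further work.

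I expect no substantial obstacle here. All the analytic content, namely the strong initial continuity of energy from Lemma \ref{StrongRightCont} and the $L^2(\eta)$ convergence $v(\gamma(s),s)\to v_0(\gamma(0))$ combined with Jensen's inequality, was already carried out in steps 3 through 5 of the proof of Lemma \ref{EtaLemma}.
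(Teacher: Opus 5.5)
Your proof is correct and follows the paper's own argument: you rewrite the $L^2(\rho_0)$ integral as an integral over path space, using $X(\gamma(0),t)=\gamma(t)$ on $S$ and $\rho_0=(e_0)_{\#}\eta$, and then apply part $(vi)$ of Lemma \ref{EtaLemma}. Your measurability and choice-of-representative remarks supply details the paper leaves implicit.
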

\begin{proof}
This follows from Lemma \ref{EtaLemma} part $(vi)$ as 
$$
\lim_{t\to0^+}\int_{\R}\left(\frac{X(t)- \id}{t}- v_0\right)^2d\rho_0=\lim_{t\to0^+}\int_\Gamma\left(\frac{\gamma(t)-\gamma(0)}{t}-v_0(\gamma(0))\right)^2d\eta(\gamma)=0.
$$
\end{proof}
We also will need the following implicit relation. We remark that this was essentially proved in Lemma 5.1 of \cite{MR4752570}.
\begin{lem}
For each $t\ge 0$,
\be\label{ImplicitRelation}
X(t)=\E_{\rho_0}[\textup{id}_\R+tv_0|X(t)].
\ee
\end{lem}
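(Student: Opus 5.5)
The plan is to use the facts that information is only lost as time goes on and that conditional expectation is a bounded linear projection. The case $t=0$ is trivial, since $X(0)=\id$ is increasing and $\E_{\rho_0}[\id|\id]=\id$. So fix $t>0$. Since $X:[0,\infty)\to L^2(\rho_0)$ is Lipschitz and $L^2(\rho_0)$ is a Hilbert space (hence has the Radon--Nikodym property), $X$ is a.e. differentiable and is the Bochner integral of its derivative. Combined with the preceding proposition, this gives
\be
X(t)=\id+\int_0^t \E_{\rho_0}[v_0|X(s)]\,ds
\ee
as an identity in $L^2(\rho_0)$.

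Next I apply the projection $P_t:=\E_{\rho_0}[\,\cdot\,|X(t)]$ to both sides. $P_t$ is the orthogonal projection of $L^2(\rho_0)$ onto the closed subspace of $X(t)$-measurable functions, so it is a bounded linear operator and therefore commutes with the Bochner integral. Moreover, $X(t)$ is itself $X(t)$-measurable and lies in $L^2(\rho_0)$, so $P_tX(t)=X(t)$. This yields
\be
X(t)=P_t\,\id+\int_0^t P_t\big(\E_{\rho_0}[v_0|X(s)]\big)\,ds .
\ee

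The key step is the tower property: $P_t\big(\E_{\rho_0}[v_0|X(s)]\big)=\E_{\rho_0}[v_0|X(t)]$ for every $s\in(0,t]$. This holds provided the subspace of $X(t)$-measurable functions is contained in the subspace of $X(s)$-measurable functions. That containment is exactly what Proposition \ref{PropStickyFunction} supplies. It gives a Borel $f(\cdot,s)$ with $X(t)=f(X(s),s)$ $\rho_0$-a.e., so any $h(X(t))\in L^2(\rho_0)$ equals $(h\circ f(\cdot,s))(X(s))$ $\rho_0$-a.e.

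Concretely, for such $h$ I will verify
\be
\int_\R \E_{\rho_0}[v_0|X(s)]\,h(X(t))\,d\rho_0=\int_\R v_0\, h(X(t))\,d\rho_0 ,
\ee
directly from the defining property of $\E_{\rho_0}[v_0|X(s)]$, with $h\circ f(\cdot,s)$ as the test function. The integrand of the $ds$-integral is then constant in $s$, and the right-hand side becomes $P_t\id+tP_tv_0=\E_{\rho_0}[\id+tv_0|X(t)]$, which is \eqref{ImplicitRelation}.

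I expect the main obstacle to be measure-theoretic bookkeeping rather than any estimate. The definition of conditional expectation in Section \ref{sect:conditional} uses $X(t)$ with values in $[-\infty,\infty]$ and Borel $h$ on $[-\infty,\infty]$. I will need to note that $X(t)$ and $X(s)$ are finite $\rho_0$-a.e. because they lie in $L^2(\rho_0)$, and that composing with the Borel function $f(\cdot,s)$ preserves the class of admissible test functions. I also need $\rho_0$-a.e. equalities to be harmless, and $P_t$ on the integrand to be meaningful for a.e. $s$, which only requires $s\mapsto\E_{\rho_0}[v_0|X(s)]$ to be Bochner integrable; this holds since it equals $\dot X(s)$ a.e.
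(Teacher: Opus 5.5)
Your proposal is correct and is essentially the paper's argument. The paper fixes $h$ with $h(X(t))\in L^2(\rho_0)$ and shows that $g(s)=\int_\R h(X(t))(X(s)-\id-sv_0)\,d\rho_0$ is absolutely continuous with $g(0)=0$ and $\dot g=0$ a.e., by writing $h(X(t))=h(f(X(s),s))$ via Proposition \ref{PropStickyFunction}; this is the differential, tested form of your integrate-then-project tower-property step, and the bookkeeping points you flag are handled correctly.
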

\begin{proof}
Fix $t>0$ and assume $h: \R\rightarrow \R$ is Borel with $h(X(t))\in L^2(\rho_0)$.  Also recall that 
$X(t)=f(X(s),s)$ for $s\in (0,t]$ (as in Proposition \ref{PropStickyFunction}) and set 
$$
g(s)=\int_{\R}h(X(t))(X(s)-(\text{id}_\R+sv_0))d\rho_0. 
$$
It is not hard to see that $g$ is absolutely continuous with $g(0)=0$.

\par Moreover,  
$$
\dot g(s)=\int_{\R}h(X(t))(\dot X(s)-v_0)d\rho_0=\int_{\R}h(f(X(s),s))(\dot X(s)-v_0)d\rho_0=0
$$
for almost every $s\in (0,t]$. Thus, $g(s)=0$ for all $s\in [0,t]$.  That is, 
$$
\int_{\R}h(X(t))(X(s)-(\text{id}_\R+sv_0))d\rho_0=0
$$
for all $s\in [0, t]$. We conclude by taking $s=t$. 
\end{proof} 
\par We are finally ready to prove the Uniqueness Theorem stated in the introduction.  
\begin{proof}[Proof of the Uniqueness Theorem]
Let $X$ be as above and suppose there is another weak solution pair $\overline \rho$ and $\overline v$ of \eqref{SPS} with initial conditions \eqref{IC} and which satisfies \eqref{entropy} and \eqref{SICtwo}. There is a Lipschitz 
$Y: [0,\infty)\to L^2(\rho_0)$ such that $Y(t)$ is nondecreasing for each $t\ge 0$, 
\be
\begin{cases}
\displaystyle\dot Y(t)=\E_{\rho_0}[v_0|Y(t)]\text{ a.e. $t\ge 0$}\\
\displaystyle Y(0)=\text{id}_{\R},
\end{cases}
\ee
$\overline{\rho}_t=Y(t)_{\#}\rho_0$ for $t\ge 0$, and $\dot Y(t)=\overline v(Y(t),t)$ for almost every $t\ge 0$. 

\par In view of the relation \eqref{ImplicitRelation} and the inequality \eqref{KeyMonotonicityIneq}
\begin{align}
&\frac{d}{dt}\frac{1}{2}\int_\R(X(t)-Y(t))^2d\rho_0\\
&\quad=\int_\R(X(t)-Y(t))(\dot X(t)-\dot Y(t))d\rho_0\\
&\quad=\int_\R(X(t)-Y(t))(\E_{\rho_0}[v_0|X(t)]-\E_{\rho_0}[v_0|Y(t)])d\rho_0\\
&\quad=\frac{1}{t}\int_\R(X(t)-Y(t))(\E_{\rho_0}[tv_0|X(t)]-\E_{\rho_0}[tv_0|Y(t)])d\rho_0\\
&\quad=\frac{1}{t}\int_\R(X(t)-Y(t))(X(t)-\E_{\rho_0}[\id|X(t)]-(Y(t)-\E_{\rho_0}[\id|Y(t)]))d\rho_0\\
&\quad=
\frac{1}{t}\int_\R(X(t)-Y(t))^2d\rho_0- \frac{1}{t}\int_\R(X(t)-Y(t))(\E_{\rho_0}[\text{id}_\R|X(t)]-\E_{\rho_0}[\text{id}_\R|Y(t)])d\rho_0\\
&\quad\le \frac{1}{t}\int_\R(X(t)-Y(t))^2d\rho_0
\end{align} 
for almost every $t>0$.  It follows that 
$$
\frac{d}{dt}\int_\R\left(\frac{X(t)-Y(t)}{t}\right)^2d\rho_0\le 0
$$
for almost every $t>0$. 

\par Consequently,
$$
\int_\R\left(\frac{X(t)-Y(t)}{t}\right)^2d\rho_0\le \int_\R\left(\frac{X(s)-Y(s)}{s}\right)^2d\rho_0
$$
 for $0<s\le t$. Appealing to the limit \eqref{differentiabilityZero}, we also have 
 \begin{align}
\lim_{s\rightarrow 0^+}\int_\R\left(\frac{X(s)-Y(s)}{s}\right)^2d\rho_0&=
\lim_{s\rightarrow 0^+}\int_\R\left(\frac{X(s)-\id}{s}-\frac{Y(s)-\id}{s}\right)^2d\rho_0=0.
 \end{align}
 Thus, 
 $$
 \int_\R\left(\frac{X(t)-Y(t)}{t}\right)^2d\rho_0\le 0
 $$
 and $X(t)=Y(t)$ $\rho_0$ almost everywhere for all $t>0$. 
 
 \par It follows that 
 $$
 \rho_t=X(t)_{\#}\rho_0=Y(t)_{\#}\rho_0= \overline{\rho}_t
 $$
 for each $t\ge 0$. Furthermore, for almost every $t\ge 0$,
 $$
 v(X(t),t)=\dot X(t)=\dot Y(t)= \overline{v}(Y(t),t)=\overline{v}(X(t),t)
 $$
 $\rho_0$ almost everywhere. That is, 
$v(x,t)=\overline{v}(x,t)$ for $\rho_t$ almost every $x\in \R$ and almost every $t\ge 0$.  
\end{proof}


\appendix

\bibliography{SPSUniquebib}{}

\bibliographystyle{plain}

\typeout{get arXiv to do 4 passes: Label(s) may have changed. Rerun}

\end{document}

%% file: noncolliding-figure.tex
\begin{figure}[ht]
\centering
\begin{tikzpicture}[x=1.3cm,y=1.1cm,>=Latex]

\def\Tmax{4}

\draw[->] (-3.2,0) -- (3.4,0) node[right] {$x$};
\draw[->] (0,0) -- (0,\Tmax+0.4) node[above] {$t$};

\foreach \x/\v/\lab in {
    -2.4/-0.45/1,
    -1.6/-0.25/2,
    -0.8/-0.10/3,
     0.8/ 0.10/4,
     1.6/ 0.25/5,
     2.4/ 0.45/6
}{
    \draw[line width=0.4pt] (\x,-0.06) -- (\x,0.06);
    \node[below=3pt] at (\x,0) {$x_{\lab}$};

    \draw[line width=1pt]
        (\x,0) -- ({\x+\v*\Tmax},\Tmax);
}

\end{tikzpicture}
\caption{Particle trajectories for a nondecreasing initial velocity \(v_0\) with point masses located at the positions $x_1,\dots, x_6$.
The particle initially at \(x_i\) follows the straight line
\(x=x_i+t\,v_0(x_i)\), and the trajectories do not intersect. This corresponds to a weak solution of the pressureless Euler equations of the form \eqref{method-of-char}.}
\label{fig:noncolliding-trajectories}
\end{figure}

%% file: colliding-schematic.tex
\begin{figure}[ht]
\centering
\begin{tikzpicture}[x=1.2cm,y=0.95cm,>=Latex]

\def\Tmax{8}

\draw[->] (-4.0,0) -- (4.4,0) node[right] {$x$};
\draw[->] (0,0) -- (0,\Tmax+0.4) node[above] {$t$};

\foreach \x/\lab in {-3.2/1, -2.2/2, -0.8/3, 1.0/4, 2.2/5, 3.4/6}{
    \draw[line width=0.4pt] (\x,-0.07) -- (\x,0.07);
    \node[below=3pt] at (\x,0) {$x_{\lab}$};
}

\draw[line width=1pt] (-3.2,0) -- (-2.0,2.0);
\draw[line width=1pt] (-2.2,0) -- (-2.0,2.0);
\fill (-2.0,2.0) circle (1.6pt);
\draw[line width=1pt] (-2.0,2.0) -- (-1.2,4.5);
\draw[line width=1pt] (-0.8,0) -- (-1.2,4.5);
\fill (-1.2,4.5) circle (1.6pt);
\draw[line width=1pt] (-1.2,4.5) -- (-0.9,\Tmax);

\draw[line width=1pt] (1.0,0) -- (1.8,3.0);
\draw[line width=1pt] (2.2,0) -- (1.8,3.0);
\fill (1.8,3.0) circle (1.6pt);
\draw[line width=1pt] (1.8,3.0) -- (2.6,\Tmax);

\draw[line width=1pt] (3.4,0) -- (3.2,\Tmax);

\end{tikzpicture}
\caption{Evolution of finitely many particles located at $x_1,\dots, x_6$. In this schematic, $v_0$ isn't nondecreasing. The trajectories of the particles are piecewise linear. When trajectories meet, the corresponding particles undergo a perfectly inelastic collision and continue together.  This is an example of a weak solution of the pressureless Euler equations described in \eqref{eq:sticky-soln-ex}.}
\label{fig:colliding-trajectories}
\end{figure}